\documentclass[12pt]{article}

\usepackage{amsfonts}
\usepackage{amsmath}

\textheight 205 true mm \textwidth  150 true mm \oddsidemargin
2.5true mm \evensidemargin 2.5 true mm

%
%

\newcommand{\beq}{\begin{equation}}
\newcommand{\eeq}{\end{equation}}
\newcommand{\bea}{\begin{eqnarray}}
\newcommand{\eea}{\end{eqnarray}}
\newcommand{\beas}{\begin{eqnarray*}}
\newcommand{\eeas}{\end{eqnarray*}}

%
%
\newtheorem{theorem}{Theorem}[section]
\newtheorem{definition}[theorem]{Definition}
\newtheorem{proposition}[theorem]{Proposition}
\newtheorem{corollary}[theorem]{Corollary}
\newtheorem{lemma}[theorem]{Lemma}
\newtheorem{remark}[theorem]{Remark}
\newtheorem{example}[theorem]{Example}
\newtheorem{examples}[theorem]{Examples}
\newtheorem{foo}[theorem]{Remarks}
\newtheorem{assumption}[theorem]{Assumption}

%
%
\newenvironment{proof}{\addvspace{\medskipamount}\par\noindent{\it
Proof}.}
{\unskip\nobreak\hfill$\Box$\par\addvspace{\medskipamount}}





\parindent=0pt

\title{Convergence of optimal expected utility for a sequence of discrete-time markets in initially enlarged filtrations}
\author{Geoff Lindsell}
\date{\today}

\begin{document}
\maketitle
\begin{abstract}
In this paper, we extend Kreps' conjecture that optimal expected utility in the classic Black-Scholes-Merton (BSM) economy is the limit of optimal expected utility for a sequence of discrete-time economies in initially enlarged filtrations converge to the BSM economy in an initially enlarged filtration in a "strong" sense.  The $n$-th discrete time economy is generated by a scaled $n$-step random walk, based on an unscaled random variable $\xi$ with mean 0, variance 1 and bounded support.  Moreover, the informed insider knows each functional generating the enlarged filtrations path-by-path.  We confirm Kreps' conjecture in initially enlarged filtrations when the consumer's utility function $U$ has asymptotic elasticity strictly less than one.
\end{abstract}

\begin{section}{Introduction}
Consider a sequence of independent and identically distributed random variables, having distribution $\xi$ of mean 0, variance 1, and bounded support.  We consider economies consisting of two securities, a riskless bond with interest rate 0, and a risky security/stock, trading at times $0, \frac{1}{n},...,\frac{n-1}{n}$.  We assume the price of the stock is modeled by a geometric random walk, i.e.
\begin{align*}
S_{\frac{k}{n}} = \exp \left( \sum_{j = 1}^{k} \frac{\xi_{j}}{\sqrt{n}} \right) \text{ where } \xi_{0} = 0 \text{ and } S_{0} = 1
\end{align*}
At time 1, the bond pays a consumption dividend of 1, and the stock pays a consumption dividend of $S_{1}$ as defined above. \\
As is standard, we embed the geometric random walk into the space of continuous functions starting at 0, denoted $\Omega = C_{0}[0,1]$.  $\omega$ will denote a generic element of $\Omega$, with $\omega(t)$ denoting $\omega$ evaluated at $t$.  $\Omega$ is given the sup-norm topology with Borel $\sigma$-field $\mathcal{F}$ and standard filtration $(\mathcal{F}_{t})_{0 \leq t \leq 1}$  For each $n$, $P_{n}$ will denote the probability measure on $\Omega$ such that the joint distribution of $(\omega(0),\omega(1/n),...,\omega(1))$ is equal in distribution to that of $(\xi_{0},\xi_{1},...,\xi_{n})$, and such that $\omega(t)$ for $k/n < t < (k+1)/n$ is the linear interpolation of $\omega(k/n)$ and $\omega((k+1)/n)$.  Define $S: \Omega \rightarrow \mathbb{R}_{+}$ as $S_{t}(\omega) = e^{\omega(t)}$. \\
Recall Donsker's theorem states that $P_{n} \overset{d}{\rightarrow} P$ where $P$ is the Wiener measure on $C_{0}[0,1]$, so that under $P$, $\omega$ is a standard Brownian motion with $\omega(0) = 0$, and $S_{t}(\omega)$ under $P$ is a geometric Brownian motion, as in the traditional Black-Scholes-Merton economy. \\
As in Kreps and Schachermayer \cite{MR4154768} we initially consider a consumer wishing to maximize expected utility with an initial wealth $x$, constructing a portfolio made of the stock and the bond with "public" information flow $\mathcal{F}_{t}$.  After the initial construction of the portfolio, the investor trades in a nonanticipatory and self-financing fashion in the stock and the bond, seeking to maximize the expectation of a utility function $U : (0, \infty) \rightarrow \mathbb{R}$ at terminal time 1. \\
The original question of Kreps was if the consumer in the $n$-th discrete-time economy, where the stock and bond trade only at times $0, 1/n, 2/n,..., (n-1)/n$, does the optimal expected utility the investor can attain approach the the optimal expected utility obtained in the continuous time Black-Scholes-Merton economy. \\
A prevalent assumption in financial mathematics is a common information flow among all participants.  Recent literature, including Amendinger \cite{MR1775229, MR1954386, MR1632213} and Baudoin \cite{MR3954302, MR2021790}, has addresssed the practical importance of distinguisihing the information flow between ordinary investors and what we shall call "the informed insider."  Suppose we have two investors. The "ordinary economic agent" makes decisions according to the public information flow of the market, denoted $\mathcal{F}$, where $\mathcal{F}$ is a filtration satisfying the usual conditions; in particular, it represents the natural filtration in the BSM economy, as outlined above.  Then, the "insider" who makes decisions not only according to $\mathcal{F}$, but an additional signal modeled by the outcome of some random variable $Y$, resulting in a filtration $\mathcal{G}$ with $\mathcal{G}_{t} = \underset{\epsilon > 0}{\bigcap}\left(\mathcal{F}_{t + \epsilon} \vee \sigma(Y)\right)$.  For instance, the insider may know the terminal price of a stock, or the maximum/minimum price of a stock at a specified time horizon.  In this work, we create a new mathematical framework from the perspective of the insider, resulting in a probability space $(\Omega, \mathcal{G}, \tilde{P}_{t})$, where $\tilde{P}_{t}$ is the $[0,t]$-insider martingale measure or $[0,t]$-martingale preserving measure under initial elargement.  This name is derived from the fact that, under $\tilde{P}_{t}$, martingales with respect to the original filtration $\mathcal{F}$ remain martingales when considered as functions measurable with respect to $\mathcal{G}$.  Additionally, we construct the same framework on discrete spaces.  
Kreps' \cite{MR4154768} showed that optimal expected utility in the classic Black-Scholes-Merton economy is the limit of optimal expected utility for a sequence of discrete time economies.  In particular, letting $u_{n}^{\mathcal{F}^{n}}(x)$ denote the supremal expected utility of the ordinary investor in the $n$-th discrete-time economy, where $\mathcal{F}^{n}$ represents the natural filtration of the sequence $(\xi_{k})_{k=1}^{n}$, and letting $u^{\mathcal{F}}(x)$ denote the ordinary investor's supremal expected utility in the BSM economy, he proves $\underset{n \rightarrow \infty}{\lim}u_{n}^{\mathcal{F}^{n}}(x) = u^{\mathcal{F}}(x)$, under the assumption that the asymptotic elasticity of utility of the utility function U, as defined in Kramkov and Schachermayer \cite{MR2023886}, is less than 1.
In this work, we extend Kreps' result in the initially enlarged filtration.  In more detail, letting $u_{n}^{\mathcal{G}^{n}}(x)$ denote the supremal expected utility of the investor in the $n$-th discrete-time economy in enlarged filtrations $\mathcal{G}_{n} := \mathcal{F}^{n} \vee \sigma(Y_{n})$, and letting $u^{\mathcal{G}}(x)$ denote the investor's supremal expected utility in the BSM economy with enlarged filtration $\mathcal{G}$, we prove $\underset{n \rightarrow \infty}{\lim}u_{n}^{\mathcal{G}^{n}}(x) = u^{\mathcal{G}}(x)$, under the assumption that the asymptotic elasticity of utility of the utility function U is less than 1.
\end{section}

\begin{section}{Preliminaries}\emph{The Financial Market of the Informed Insider.} 
Throughout this work, we rely heavily on the works of Amendinger, Baudoin, and Kreps, and the framework set forth in those works.  We extend those results to include discrete economies of the form
\begin{align*}
\left(\Omega_{n}, \mathcal{H}^{n} = (\mathcal{H}_{m})_{0 \leq m \leq n}, P_{n}, S^{n} = (S_{m})_{0 \leq m \leq n} \right)
\end{align*}
where $\Omega_{n}$ represents a discrete sample space, $\mathcal{H}^{n}$ represents the discrete filtration, $P_{n}$ represents a probability measure on $\mathcal{H}^{n}$ and $S^{n}$ is the price process in this discrete economy. \\
Similarly, we consider continuous time markets of the form
\begin{align*}
(\Omega, \mathcal{H} = (\mathcal{H}_{t})_{0 \leq t \leq T}, P, S = (S_{t})_{0 \leq t \leq T})
\end{align*}
where $\Omega$ is a continuous sample space, $\mathcal{H}$ is a filtration satisfying the usual conditions, $P$ is a measure on $\mathcal{H}$, and $S$ is the price process of the market.\\
We rely heavily on the works of Kramkov and Schachermayer, especially on their notion of utility functions and results proven in \cite{MR1722287}.  In particular we have the following definition.
\begin{definition}
A \textbf{utility function} is a strictly increasing, strictly concave, and twice continuously differentiable function
\begin{align*}
U: (0, + \infty) \rightarrow \mathbb{R}
\end{align*}
which satisfies the \textbf{Inada conditions}
\begin{align*}
\lim_{x \rightarrow + \infty} U'(x) = 0, \lim_{x \rightarrow 0^{+}} U'(x) = + \infty
\end{align*}
where we use the convention that $U(x) = - \infty$ for $x \leq 0$.  We shall denote by $I$ the inverse of $U'$, and by $\tilde{U}$ the convex conjugate of $U$:
\begin{align*}
\tilde{U}(y) = \max_{x > 0}(U(x) - xy)
\end{align*}
That is, the Fenchel-Legendre transform of $-U(-x)$.
\end{definition}
$\mathcal{H}^{n}$ and $\mathcal{H}$ will at times represent the original filtration in the discrete, as well as the continuous Black-Scholes-Merton economies, denoted $\mathcal{F}^{n}$ and $\mathcal{F}$, respectively.  We next describe when $\mathcal{H}^{n}$ and $\mathcal{H}$ represent enlarged filtrations, denoted $\mathcal{G}^{n}$ and $\mathcal{G}$, enlarged by a signal/random variable $Y_{n}$ in the discrete case, enlarged by a signal/random variable $Y$ in the continuous case, observed by the "informed insider."\\
Let $\mathcal{P}_{n}$ be a discrete set containing the range of $Y_{n}$, endowed with the discrete $\sigma$-algebra and let $Y_{n}: \Omega_{n} \rightarrow \mathcal{P}_{n}$ be an $\mathcal{F}_{n}$-measurable random variable.  Denote by $P_{Y_{n}}$ the law of $Y_{n}$ and assume that $Y_{n}$ admits a regular disintegration with respect to the filtration $\mathcal{F}^{n}$.  It is common in the literature, e.g. in the works of Amendinger \cite{MR1775229, MR1954386, MR1632213} or Baudoin \cite{MR1919610, MR2021790, MR2213259} to make the following set of assumptions.
\begin{assumption}
There exists a jointly measurable and $\mathcal{F}^{n}$-adapted process 
\begin{align*}
\eta_{m}^{y,n}, \text{ } 0 \leq m < n, \text{ } y \in \mathcal{P}_{n}
\end{align*}
satisfying for all $0 \leq m < n$ and $y \in \mathcal{P}_{n}$,
\begin{align*}
P[Y_{n} = y | \mathcal{F}_{m}] = \eta_{m}^{y,n}P[Y_{n} = y]
\end{align*}
\end{assumption}
\begin{remark}
We can note here that, if we denote by $P_{n}^{y}$ the disintegrated probability measure defined by $P_{n}^{y} = P_{n}[\cdot | Y_{n} = y]$, then the above assumption implies that for $m <  n$,
\begin{align*}
P_{/\mathcal{F}_{m}}^{y} = \eta_{m}^{y,n}P_{/\mathcal{F}_{m}}
\end{align*}
In particular, for each $y \in \mathcal{P}_{n}$, the process $(\eta_{m}^{y})_{0 \leq m < n}$ is a martingale in the filtration $\mathcal{F}^{n}$.
\end{remark}
Finally, we shall denote by $\mathcal{G}^{n}$ the filtration $\mathcal{F}^{n}$ initially enlarged with $Y_{n}$, i.e. $\mathcal{G}_{m} = \mathcal{F}_{m} \vee \sigma (Y_{n})$.
Now, as in the discrete case, we will introduce the object on which anticipations will be made.  Let $\mathcal{P}$ be a Polish space endowed with its Borel $\sigma$-algebra and let $Y: \Omega \rightarrow \mathcal{P}$ be an $\mathcal{F}_{T}$-measurable random variable.  We denote by $P[Y \in y]$ the law of Y and assume that Y admits a regular disintegration with respect to the filtration $\mathcal{F}$.  That is,
\begin{assumption}
We assume there exists a jointly measurable, continuous in t, and $\mathcal{F}$-adapted process
\begin{align*}
\eta_{t}^{y}, \text{ } 0 \leq t < T, \text{ } y \in \mathcal{P},
\end{align*}
satifying for all $dt \bigotimes P_{Y}$ almost every $0 \leq t < t$ and $y \in \mathcal{P}$,
\begin{align*}
P[Y \in dy | \mathcal{F}_{t}] = \eta_{t}^{y} P[Y \in y]
\end{align*}
\end{assumption}
This is a classical assumption in the theory of the initial enlargement of the filtration $\mathcal{F}$ by Y.  This assumption is not too restrictive, and will be satisfied for "nice" functionals, to be seen.  The existence of a conditional density $\frac{P[Y \in dy | \mathcal{F}_{t}]}{P[Y \in y]}$ is the main pint, the existence of a regular version follows from general results on stochastic processes.
\begin{remark}
If we denote by $P^{y}$ the disintegrated probability measure defined by $P^{y} = P[\cdot | Y = y]$, then the above assumption implies that for $t < T$,
\begin{align*}
P_{/\mathcal{F}_{t}}^{y} = \eta_{t}^{y}P_{/\mathcal{F}_{t}}
\end{align*}
In particular, for $P_{Y}$-a.e. $y \in \mathcal{P}$, the process $(\eta_{t}^{y})_{0 \leq t < T}$ is a martingale in the filtration $\mathcal{F}$ (not uniformaly integrable).
\end{remark}
Finally, we shall denote by $\mathcal{G}$ the filtration $\mathcal{F}$ initially enlarged with Y, i.e. $\mathcal{G}_{t}$ is the $P-completion$ of $\bigcap_{\epsilon > 0} (\mathcal{F}_{t + \epsilon} \vee \sigma(Y))$, $t < T$.
\begin{subsection}{Martingale Preserving Measure in the Enlarged Filtration}
Throughout this section we assume that for $P_{n}[Y_{n} = y]$ a.e. $y \in \mathcal{P}_{n}$, the process $(\eta_{m}^{y,n})_{0 \leq m < n} := P_{n}[Y_{n} = y | \mathcal{F}_{m}]$ is strictly positive $P_{n}$ a.s. The following result is the discrete-time version of Proposition 2.3 in \cite{MR1632213} and the proof is identical. 
\begin{lemma}\cite{MR1632213}
Suppose that the regular conditional distributions of $Y_{n}$ given $\mathcal{F}_{m}$ are equivalent to the law of $Y_{n}$ for $0 \leq m < n$, i.e. for all $y \in \mathcal{P}_{n}$, the process $(\eta_{m}^{y})_{0 \leq m < n}$ is strictly positive.  Then
\begin{enumerate}
	\item $\frac{1}{\eta_{m}^{Y_{n}}}, \text{ } m < n$ is a $P_{n}$-martingale (not necessarily uniformly integrable) in the enlarged filtration $\mathcal{G}^{n}$
	\item $E_{P_{n}}[\frac{1}{\eta_{m}^{Y_{n}}} | \mathcal{F}_{m}] = 1, \text{ } m < n.$
	\item For $0 \leq m < n$, the $\sigma$-algebras $\mathcal{F}_{m}$ and $\sigma(Y_{n})$ are independent under
		\begin{align*}
			\tilde{P}^{n}_{m}(A) := \sum_{y \in A} \frac{1}{\eta_{m}^{y,n}} P_{n}[Y_{n} = y]
		\end{align*}
		i.e. , for $A_{m} \in \mathcal{F}_{m}$ and $B \in \sigma(Y_{n})$,
		\begin{align*}
			\tilde{P}^{n}_{m}[A_{m} \bigcap [Y_{n} \in B]] = P_{n}[A_{m}]P_{n}[Y_{n} \in B] = \tilde{P}^{n}_{m}[A_{m}]\tilde{P}^{n}_{m}[Y_{m} \in B]
		\end{align*}
\end{enumerate}
\end{lemma}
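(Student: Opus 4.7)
\emph{Plan.} The strategy is to reduce every assertion to the density identity $P_n[Y_n = y \mid \mathcal{F}_m] = \eta_m^{y,n}\, P_n[Y_n = y]$ from the Assumption inserted inside a conditional expectation. The strict positivity hypothesis on $(\eta_m^{y,n})_{0 \le m < n}$ legitimises dividing by $\eta$ throughout.

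I would dispatch part~(2) first, since everything else depends on it. Because $Y_n$ takes values in the countable set $\mathcal{P}_n$,
\[
E_{P_n}\!\left[\frac{1}{\eta_m^{Y_n,n}}\,\Big|\,\mathcal{F}_m\right] \;=\; \sum_{y \in \mathcal{P}_n} \frac{P_n[Y_n = y \mid \mathcal{F}_m]}{\eta_m^{y,n}} \;=\; \sum_{y \in \mathcal{P}_n} P_n[Y_n = y] \;=\; 1.
\]
Taking unconditional expectations gives $E_{P_n}[1/\eta_m^{Y_n,n}] = 1$, which simultaneously secures the $L^1(P_n)$-integrability needed in (1) and shows that the set function $\tilde{P}_m^n$ in (3) is a probability measure.

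For part~(1), since $\mathcal{P}_n$ is discrete the $\sigma$-algebra $\mathcal{G}_m = \mathcal{F}_m \vee \sigma(Y_n)$ is generated by rectangles $A_m \cap \{Y_n = y\}$ with $A_m \in \mathcal{F}_m$ and $y \in \mathcal{P}_n$. It therefore suffices to show that
\[
E_{P_n}\!\left[\,1_{A_m}\, 1_{\{Y_n = y\}}\, \frac{1}{\eta_{m'}^{y,n}}\,\right]
\]
takes the same value for $m' = m$ and for $m' = m+1$. In each case I condition on $\mathcal{F}_{m'}$, pull the $\mathcal{F}_{m'}$-measurable factor $1_{A_m}/\eta_{m'}^{y,n}$ outside, and apply the density identity to replace $E_{P_n}[1_{\{Y_n=y\}} \mid \mathcal{F}_{m'}]$ by $\eta_{m'}^{y,n}\, P_n[Y_n = y]$; the $\eta$'s cancel and both sides equal $P_n[A_m]\, P_n[Y_n = y]$. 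That common value is precisely the martingale identity $E_{P_n}[1/\eta_{m+1}^{Y_n,n} \mid \mathcal{G}_m] = 1/\eta_m^{Y_n,n}$.

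Finally, part~(3) is the same computation read once more with $m' = m$: summing the displayed identity over $y \in B$ yields $\tilde{P}_m^n[A_m \cap \{Y_n \in B\}] = P_n[A_m]\, P_n[Y_n \in B]$, and the marginals $\tilde{P}_m^n[A_m] = P_n[A_m]$ and $\tilde{P}_m^n[\{Y_n \in B\}] = P_n[\{Y_n \in B\}]$ fall out by specialising $B = \mathcal{P}_n$ (using (2)) and $A_m = \Omega_n$ respectively. There is no real analytical obstacle: the only step requiring genuine thought is correctly identifying the generating class of $\mathcal{G}_m$ so that martingality can be tested on atoms, and the rest is bookkeeping, exactly as the paper suggests by pointing to Amendinger's identical continuous-time argument in \cite{MR1632213}.
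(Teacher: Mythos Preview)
Your proposal is correct and matches the approach the paper indicates: the paper does not supply its own proof but simply states that the argument is identical to Amendinger's continuous-time Proposition~2.3 in \cite{MR1632213}, and your conditioning-on-rectangles computation using the density identity is exactly that argument specialised to a countable range for $Y_n$.
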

\begin{definition}
The probability measure $\tilde{P}^{n}_{m}$, $m < n$, definted on $\mathcal{G}_{m}$ by
\begin{align*}
\tilde{P}^{n}_{m} = \frac{1}{\eta_{m}^{Y_{n}}} P_{n /\mathcal{G}_{m}}
\end{align*}
is called the $[0,m]$-martingale preserving measure associated with $P_{n}$.
\end{definition}
\begin{remark}
As a consequence of 2. in the above lemma, we can note that for $m < n$ the law of an $\mathcal{F}^{n}$-adapted process $(X_{l})_{0 \leq l \leq m}$ is the same under $P_{n /\mathcal{F}_{m}}$ as under $\tilde{P}^{n}_{m}$.
\end{remark}
Next, suppose that the regular conditional distribution of Y given $\mathcal{F}_{t}$ are equivalent to the law of Y for all $t \in [0,T)$, that is, for all $y \in \mathcal{P}$, the process $(\eta_{t}^{y})_{0 \leq t < T}$ is strictly positive P-a.s.  The works of Amendinger and Baudoin  give us the notion of martingale preserving measure in the enlarged fitration in continuous time.  
\begin{definition}
There is a  probability measure $\tilde{P}_{t}$ for $t < T$, defined on $\mathcal{G}_{t}$ by
\begin{align*}
\tilde{P}_{t} = \frac{1}{\eta_{t}^{Y}} dP_{/\mathcal{G}_{t}}
\end{align*}
called the $[0,t]$-martingale preserving measure associated with P.
\end{definition}
\begin{remark}
The main results the reader will need to recall are Proposition 2.3, Definition 2.4, and Theorem 2.5 of \cite{MR1632213}.
\end{remark}
\end{subsection}

\begin{subsection} {Utility Maximization with Strong Information in the Binomial Model and Complete Case}
In this section we study the financial market 
\begin{align*}
(\Omega_{n}, (\mathcal{F}_{m})_{0 \leq m < n}, (S_{m})_{0 \leq m < n}, P_{n})
\end{align*}
where $(S_{m})_{0 \leq m < n}$ is a binomial random walk.  Let
\[X_{k} = \begin{cases}
-1, & \text{with probability } 1 - p \\
1, & \text{with probability } 0 < p < 1
\end{cases}\]
where $S_{n} = \sum_{k = 1}^{n} X_{k}$.  \\
Recall
\begin{theorem}
Let $\{X_{n}\}_{n=0}^{\infty}$ be a simple random walk with paramter p.  For $n \geq 1$ the distribution of the random variable $X_{n}$ is discrete with support $\{-n, -n + 2, ..., 0, ..., n-2, n\}$, and probabilities
\begin{align*}
P[X_{n} = k] = \binom{n}{\frac{n+k}{2}}p^{(n+k)/2}(1-p)^{(n-k)/2}
\end{align*}
for $k \in \{-n, -n + 2, ..., 0, ..., n-2, n\}$.
\end{theorem}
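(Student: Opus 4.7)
The plan is to give a direct combinatorial argument by conditioning on the number of up-steps of the walk. Let $X_n$ denote the position of the simple random walk at time $n$, so by the definition given in the setup, $X_n$ is a sum of $n$ i.i.d.\ random variables taking values $+1$ with probability $p$ and $-1$ with probability $1-p$. The proof amounts to: (i) determining which values $k$ can possibly arise as $X_n$, (ii) counting the number of sample paths of length $n$ that end at $k$, and (iii) assigning each such path its probability.

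First, I would establish the support. If $j$ of the $n$ increments are $+1$ and the remaining $n-j$ are $-1$, then the walk's endpoint equals $j - (n-j) = 2j - n$. As $j$ ranges over $\{0,1,\ldots,n\}$, the endpoint ranges over $\{-n, -n+2, \ldots, n-2, n\}$. Conversely, for $X_n = k$ to occur we must be able to solve $2j - n = k$ in nonnegative integers with $j \le n$, which forces $k$ to have the same parity as $n$ and $|k| \le n$, yielding the claimed support with the unique solution $j = (n+k)/2$.

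Next, I would compute the probability. Fix $k$ in the support and set $j = (n+k)/2$. Each individual sample path with exactly $j$ up-steps and $n-j$ down-steps has probability $p^j (1-p)^{n-j}$ by independence of the increments, and the number of such paths is the number of ways to select the $j$ time indices at which the up-steps occur, namely $\binom{n}{j} = \binom{n}{(n+k)/2}$. Summing over the disjoint events corresponding to distinct paths gives
\begin{align*}
P[X_n = k] \;=\; \binom{n}{\tfrac{n+k}{2}} p^{(n+k)/2}(1-p)^{(n-k)/2},
\end{align*}
which is exactly the desired formula.

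There is no real obstacle here; the statement is essentially the binomial theorem in disguise, and the only points requiring a line of care are the parity/support analysis and the bijection between sample paths and subsets of $\{1,\ldots,n\}$ of size $(n+k)/2$. As a sanity check I would note that summing the expression over $k$ in the support is equivalent, via the substitution $j = (n+k)/2$, to the binomial identity $\sum_{j=0}^n \binom{n}{j} p^j (1-p)^{n-j} = 1$, confirming that a probability distribution has indeed been specified.
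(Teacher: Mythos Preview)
Your argument is correct and is the standard combinatorial derivation: identify the number $j$ of up-steps, note that $X_n = 2j - n$ forces $j = (n+k)/2$ (whence the parity and support constraints), count the $\binom{n}{j}$ paths with that many up-steps, and multiply by the common path probability $p^j(1-p)^{n-j}$.

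There is nothing to compare against in the paper: the theorem is stated there with the word ``Recall'' and no proof is given---it is simply quoted as a known fact before being used in the computation of $P_n[Y_n = y \mid S_m]$. Your write-up supplies exactly the elementary justification one would expect for such a recalled result, and the sanity check via the binomial identity is a nice touch.
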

Next we compute $P[Y_{n} = y | S_{m}]$ where $Y_{n} = \underset{0 \leq m \leq n}{\max} S_{m}$.  Note that $P[Y_{n} = y | S_{m} = x] = 0$ unless $x \leq y$ and x is even.  In this case, using the fact that the increments are independent and stationary, and the fact that the random walk is a Markov Process, we have
\begin{align*}
P_{n}[Y_{n} = y | S_{m} = x] &= \frac{P_{n}[Y_{n} = y, S_{m} = x]}{P_{n}[S_{m} = x]} \\
&= \frac{P_{n}[Y_{n-m} = y - x]}{P_{n}[S_{m} = x]} \\
&= \frac{\binom{n-m}{\frac{(n-m) + (y-x) + 1}{2}}2^{-(n-m)}}{\binom{m}{\frac{m+x}{2}}p^{\frac{m+x}{2}}(1-p)^{\frac{m-x}{2}}}
\end{align*}
Thus we may take as definition
\begin{align*}
\eta_{m}^{y,n} &= P_{n}[Y_{n} = y | S_{m}] \\
&= \sum_{x \leq y} P_{n}[Y_{n} = y| S_{m} = x] P_{n}[S_{m} = x] \\
&= \sum_{x \leq y} \binom{n-m}{\lfloor \frac{(n-m) + (y - x) + 1}{2} \rfloor}2^{-(n - m)}
\end{align*}
We may then take as definition
\begin{definition}
Let $Y_{n} = \underset{0 \leq m \leq n}{\max} S_{m}$.  Then
\begin{align*}
\eta_{m}^{y,n} &= P_{n}[Y_{n} = y | S_{m}] \\
&= \sum_{x \leq y} \binom{n-m}{\lfloor \frac{(n-m) + (y - x) + 1}{2} \rfloor}2^{-(n - m)}
\end{align*}
\end{definition}
\end{subsection}
\begin{subsection}{A Convergence Result}
In order to prove Kreps' conjecture in the initially enlarged filtration, we will need to extend the discrete random variables and probability spaces to a continuous setting, as is done in Kreps' 2019, and the proof of Donsker's Theorem.  \\
For this, consider a sequence $(\xi_{j})_{j = 1}^{\infty}$ of independent, identically distributed random variables with mean zero and variance $\sigma^{2}$, $0 < \sigma^{2} < \infty$ as well as the sequence of partial sums $S_{0} = 0$, $S_{k} = \sum_{j = 1}^{k} \xi_{j}$, $k \geq 1$.  A continuous-time process $Y = (Y_{t}; t \geq 0)$ can be obtained from the sequence $(S_{k})_{k = 0}^{\infty}$ by linear interpolation; i.e.
\begin{align*}
Y_{t} = S_{\lfloor t \rfloor} + (t - \lfloor t \rfloor) \xi_{\lfloor t \rfloor + 1} \text{, } t \geq 0
\end{align*}
where $\lfloor t \rfloor$ denotes the greatest integer less than or equal to t.  Scaling appropriately both time and space, we obtain from Y a sequence of processes $(X^{n})$;
\begin{align*}
X_{t}^{(n)} = \frac{1}{\sigma \sqrt{n}} Y_{nt} \text{, } t \geq 0
\end{align*}
We will need a notation for the extension of the martingale preserving measures $\tilde{P}_{m}^{n}$ where the sample space has been enlarged to $C[0,t]$ for $t < T$ and we replace $S^{n}$ by its linear interpolation $X_{t}^{n}$.  We denote its extension by $\tilde{P}_{t}^{n}$ and continue to denote its conditional density by $\eta_{t}^{y,n}$. \\
Next, recall the following result, Theorem 4.17 of Karatzas and Shreve, p. 67
\begin{theorem} \cite{MR1121940}
With $(X^{(n)})$ defined as above and $0 \leq t_{1} \leq \cdots \leq t_{d} < \infty$, we have 
\begin{align*}
(X_{t_{1}}^{(n)}, ..., X_{t_{d}}^{(n)}) \overset{d}{\rightarrow} (W_{t_{1}},...,W_{t_{d}}) \text{ as } n \rightarrow \infty,
\end{align*}
where $(W_{t}, \mathcal{F}_{t}^{W};t \geq 0)$ is a standard, one-dimensional Brownian motion.
\end{theorem}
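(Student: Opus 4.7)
The plan is to reduce joint convergence to convergence of the increment vector via the continuous mapping theorem, and then to apply the classical CLT to the independent increments of $X^{(n)}$. Setting $t_0 = 0$ and $\Delta_j^{(n)} := X_{t_j}^{(n)} - X_{t_{j-1}}^{(n)}$ for $j = 1, \ldots, d$, the partial-sum vector $(X_{t_1}^{(n)}, \ldots, X_{t_d}^{(n)})$ is the image of $(\Delta_1^{(n)}, \ldots, \Delta_d^{(n)})$ under the continuous linear map $(a_1, \ldots, a_d) \mapsto (a_1, a_1 + a_2, \ldots, a_1 + \cdots + a_d)$. Since the Brownian increments $W_{t_j} - W_{t_{j-1}}$ are mutually independent with law $N(0, t_j - t_{j-1})$, it suffices by continuous mapping to show $(\Delta_1^{(n)}, \ldots, \Delta_d^{(n)}) \overset{d}{\rightarrow} (Z_1, \ldots, Z_d)$, where the $Z_j$ are independent $N(0, t_j - t_{j-1})$ random variables.

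First I would separate the linear-interpolation remainder from a pure block sum. Writing $nt_j = \lfloor nt_j \rfloor + \{nt_j\}$, the definition of $X^{(n)}$ gives
\begin{equation*}
\sigma \sqrt{n}\, \Delta_j^{(n)} = \sum_{k = \lfloor nt_{j-1} \rfloor + 1}^{\lfloor nt_j \rfloor} \xi_k + \{nt_j\}\, \xi_{\lfloor nt_j \rfloor + 1} - \{nt_{j-1}\}\, \xi_{\lfloor nt_{j-1} \rfloor + 1}.
\end{equation*}
Under the paper's standing assumption that $\xi$ has bounded support, the two boundary terms, once divided by $\sigma \sqrt{n}$, are $O(1/\sqrt{n})$ almost surely, hence converge to zero in probability. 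Slutsky's lemma then allows us to replace $\Delta_j^{(n)}$ by the clean block sum $\widetilde{\Delta}_j^{(n)} := \frac{1}{\sigma\sqrt{n}} \sum_{k = \lfloor nt_{j-1} \rfloor + 1}^{\lfloor nt_j \rfloor} \xi_k$ in every distributional limit.

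For the block sums, the Lindeberg--L\'evy CLT applied coordinate-by-coordinate gives $\widetilde{\Delta}_j^{(n)} \overset{d}{\rightarrow} N(0, t_j - t_{j-1})$, using $(\lfloor nt_j \rfloor - \lfloor nt_{j-1} \rfloor)/n \to t_j - t_{j-1}$. Crucially, the index blocks $\{\lfloor nt_{j-1} \rfloor + 1, \ldots, \lfloor nt_j \rfloor\}$ are disjoint, so the $\widetilde{\Delta}_j^{(n)}$ are \emph{exactly} independent for every $n$. Joint convergence to the independent Gaussian vector $(Z_1, \ldots, Z_d)$ then follows from the Cram\'er--Wold device: for any $\alpha \in \mathbb{R}^d$, the linear combination $\sum_j \alpha_j \widetilde{\Delta}_j^{(n)}$ is again a scaled sum of i.i.d.\ centered bounded-variance summands with total variance tending to $\sum_j \alpha_j^2 (t_j - t_{j-1})$, and the one-dimensional CLT yields the Gaussian limit $N\bigl(0, \sum_j \alpha_j^2(t_j - t_{j-1})\bigr)$, matching the characteristic function of $\sum_j \alpha_j Z_j$.

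The main obstacle is purely bookkeeping: controlling the floor-function boundary terms uniformly and verifying a Lindeberg-type condition once the number of summands in each block depends on $n$. Both issues dissolve under bounded support of $\xi$: one has $|\xi_k|/\sqrt{n} \to 0$ deterministically, so the Lindeberg condition is automatic, and the interpolation correction is uniformly $O(1/\sqrt{n})$. The argument therefore reduces to a clean application of Slutsky combined with the one-dimensional CLT along Cram\'er--Wold projections, which is exactly the Karatzas--Shreve strategy one would invoke to justify citing Theorem 4.17 of \cite{MR1121940}.
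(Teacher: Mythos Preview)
The paper does not supply its own proof of this statement; it simply records the result as Theorem~4.17 of Karatzas--Shreve and immediately uses it. Your argument is the standard one found there and is essentially correct.

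Two small points. First, in the Cram\'er--Wold step you describe $\sum_j \alpha_j \widetilde{\Delta}_j^{(n)}$ as a scaled sum of i.i.d.\ summands, but the underlying $\xi_k$ enter with block-dependent weights $\alpha_j$, so they are independent but not identically distributed. Either invoke the Lindeberg--Feller triangular-array CLT (which you hint at), or---cleaner---drop Cram\'er--Wold entirely: since the $\widetilde{\Delta}_j^{(n)}$ are mutually independent for every $n$ and each converges marginally to $Z_j$, the joint law automatically converges to the product of the marginal limits. Second, bounded support of $\xi$ is convenient but not needed for this particular theorem; finite second moment already gives $\xi_k/\sqrt{n} \to 0$ in probability, which is all Slutsky requires for the interpolation boundary terms.
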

We will use this result in the next claim.
\begin{theorem}
Let $\{X^{(n)}\}$ be the sequence of processes defined above.  Then $P_{n}[X_{T}^{(n)} | \mathcal{F}_{t}^{n}] \rightarrow P[W_{T} | \mathcal{F}_{t}^{W}]$ P-a.s. as $n \rightarrow \infty$
\end{theorem}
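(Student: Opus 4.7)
The plan is to reduce the claimed convergence to the almost-sure convergence of the martingale values $X_t^{(n)}$ to $W_t$, obtained by upgrading Donsker's invariance principle via a Skorokhod representation.

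First I would identify both sides as martingale values at time $t$. Since the rescaled increments $\xi_j/(\sigma\sqrt{n})$ are centered and independent, the partial sums form a $P_n$-martingale, so on lattice times $t=k/n$ we have $E_{P_n}[X_T^{(n)}\mid \mathcal{F}_t^n] = X_t^{(n)}$ exactly. For $t\in(k/n,(k+1)/n)$ the discrepancy is bounded by the size of a single scaled increment and therefore vanishes uniformly as $n\to\infty$, using the bounded support of $\xi$ and the $1/\sqrt{n}$ scaling. On the continuous side, $E_P[W_T\mid\mathcal{F}_t^W] = W_t$ by the Brownian martingale property. Thus the theorem reduces to showing $X_t^{(n)} \to W_t$ in an appropriate almost-sure sense.

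Next I would upgrade the finite-dimensional convergence cited above (Karatzas--Shreve Theorem~4.17) to weak convergence in $C[0,T]$. Tightness of $\{X^{(n)}\}$ in $C[0,T]$ follows from the classical modulus-of-continuity estimate for partial sums of i.i.d.\ random variables of bounded support, so the full Donsker invariance principle $X^{(n)}\Rightarrow W$ in $C[0,T]$ holds. Skorokhod's representation theorem then produces a single probability space carrying copies $\tilde{X}^{(n)}$ and $\tilde{W}$ with $\tilde{X}^{(n)}\to\tilde{W}$ uniformly on $[0,T]$ almost surely; in particular $\tilde{X}_t^{(n)}\to\tilde{W}_t$ almost surely for each fixed $t$.

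The main obstacle is making the phrase ``$P$-a.s.''\ precise, since as written the random variable $E_{P_n}[X_T^{(n)}\mid\mathcal{F}_t^n]$ naturally lives on $(\Omega,P_n)$ while $E_P[W_T\mid\mathcal{F}_t^W]$ lives on $(\Omega,P)$. The resolution is to interpret the convergence on the common Skorokhod coupling space constructed above, on which both quantities become random variables on the same probability space and the preceding two steps combine to give the claim. A secondary technical point is to control the interpolation-induced discrepancy between $E_{P_n}[X_T^{(n)}\mid\mathcal{F}_t^n]$ and $X_t^{(n)}$ uniformly in $\omega$, which again follows from the bounded support of $\xi$ together with the $1/\sqrt{n}$ scaling noted above.
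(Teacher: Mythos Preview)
Your proposal rests on a misreading of the statement. The expression $P_n[X_T^{(n)}\mid\mathcal{F}_t^n]$ does not denote the conditional \emph{expectation} $E_{P_n}[X_T^{(n)}\mid\mathcal{F}_t^n]$; it denotes the regular conditional \emph{distribution} of $X_T^{(n)}$ given $\mathcal{F}_t^n$, i.e.\ the family $\eta_t^{y,n}=P_n[X_T^{(n)}\in dy\mid\mathcal{F}_t^n]$. This is clear from the paper's own proof, which opens by identifying the left side with $\eta_t^{y,n}$, and from the way the result is used downstream (to conclude $\tilde P_t^n\overset{d}{\to}\tilde P_t$, which requires convergence of conditional densities, not merely of their means).

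Your argument correctly establishes $E_{P_n}[X_T^{(n)}\mid\mathcal{F}_t^n]=X_t^{(n)}\to W_t=E_P[W_T\mid\mathcal{F}_t^W]$ on a Skorokhod coupling, but this is strictly weaker than what is needed: convergence of conditional means says nothing about convergence of the full conditional laws. The paper's route is different in kind. It exploits the independent-increment (Markov) structure to write
\[
P_n\bigl[X_T^{(n)}\in dy\mid\mathcal{F}_t^n\bigr]=P_n\bigl[X_T^{(n)}-X_t^{(n)}\in dy-x\bigr]\Big|_{x=X_t^{(n)}},
\]
and then combines the finite-dimensional convergence $X_T^{(n)}-X_t^{(n)}\Rightarrow W_T-W_t$ (from the cited Karatzas--Shreve result) with $X_t^{(n)}\to W_t$ to obtain $P[W_T-W_t\in dy-x]\big|_{x=W_t}=P[W_T\in dy\mid\mathcal{F}_t^W]$. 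To salvage your approach you would have to carry out exactly this decomposition and control the joint convergence of the increment law and the conditioning point; the martingale-value reduction you propose bypasses precisely the object the theorem is about.
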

\begin{proof}
By Donsker's Theorem, Theorem 3.11, and the Markov Property of a Random Walk and Brownian Motion we obtain
\begin{align*}
\eta_{t}^{y,n} &= P_{n}[X_{t}^{(n)} = y | \mathcal{F}_{nt}^{n}] \\
&= P_{n}[X_{T}^{(n)} - X_{t}^{(n)} + X_{t}^{(n)} = y | \mathcal{F}_{nt}^{n}] \\
&= P_{n}[X_{T}^{(n)} - X_{t}^{(n)} = y - x] |_{x = X_{t}^{(n)}} \\
&\overset{d}{\rightarrow} P[W_{T} - W_{t} \in dy - x]|_{x = W_{t}} \\
&= P[W_{T} - W_{t} + W_{t} \in dy | \mathcal{F}_{t}^{W}] \\
&= P[W_{T} \in dy | \mathcal{F}_{t}^{W}] \text{, P-a.s.}
\end{align*}
\end{proof}
\end{subsection}
\end{section}

\begin{section}{Convergence of Optimal Utility in the Enlarged Filtration}
We make a few necessary definitions for our results.  Let $\mathcal{H}^{n} \in \{\mathcal{F}^{n}, \mathcal{G}^{n}\}$.
\begin{definition}
A trading strategy $(\Theta_{m})_{0 \leq m \leq n} \in \mathcal{H}^{n}$ is admissible if it is self-financing and if $V_{m}(\Theta_{m}) \geq 0$ for any $m \in \{0,1,...,n\}$.  The set of admissible strategies is denoted $\mathcal{A}_{\mathcal{H}^{n}}(S^{n})$.
\end{definition}
Recall that for any equivalent martingale measure $\tilde{P}_{n}^{\mathcal{H}^{n}} \in \mathcal{M}_{\mathcal{H}^{n}}(S^{n})$, the value process $(V_{m}(\Theta_{m}))_{0 \leq m \leq n}$ is a martingale. That is, for each $0 \leq m \leq n$ we have
\begin{align*}
E_{\tilde{P}_{n}^{\mathcal{H}^{n}}}\left[V_{m+1}(\Theta_{m+1}) | \mathcal{H}_{m}\right] = V_{m}(\Theta_{m})
\end{align*}
Similarly, let $\mathcal{H} \in \{\mathcal{F}, \mathcal{G}\}$.  
\begin{definition}
$\Theta \in L^{1}(S, \mathcal{H})$ is called an admissible strategy if 
\begin{align*}
\int_{0}^{t} \Theta dS =: (\Theta \cdot S)_{t} = E_{\tilde{P}^{\mathcal{H}}}\left[ (\Theta \cdot S)_{T} | \mathcal{H}_{t} \right] \text{ for all } t \in [0, T]
\end{align*}
where $E_{\tilde{P}^{\mathcal{H}}}$ represents integration with respect to the equivalent martingale measure $\tilde{P}^{\mathcal{H}}$ under the filtration $\mathcal{H}$.  The set of admissible strategies is denoted by $\mathcal{A}_{\mathcal{H}}(S)$.
\end{definition}
\begin{definition}
Let $V_{T}$ by the terminal value of the portfolio process $V = (V_{t})_{0 \leq t \leq T}$.  For an investor with information flow $\mathcal{H}$ and initial capital $x > 0$, we define the maximum expected utility from terminal wealth by
\begin{align*}
u^{\mathcal{H}}(x) = \underset{V \in \mathcal{A}_{\mathcal{H}}(S)}{\sup}E_{P}[U(V_{T})]
\end{align*}
\end{definition}
\begin{definition}
Let $V_{n}$ by the terminal value of the portfolio process $V = (V_{m})_{0 \leq m \leq n}$.  For an investor with information flow $\mathcal{H}^{n}$ and initial capital $x > 0$, we define the maximum expected utility from terminal wealth by
\begin{align*}
u_{n}^{\mathcal{H}^{n}}(x) = \underset{V \in \mathcal{A}_{\mathcal{H}^{n}}(S^{n})}{\sup}E_{P_{n}}[U(V_{n})]
\end{align*}
\end{definition}
\begin{subsection}{Asymptotic Replication of Contingent Claims for General Random-Walk Economies}
For the sake of simplicity, we shall follow the framework of Kreps (2019) in proving our main result; that is, we shall assume that the interest rate $r = 0$, and the time horizon in both the discrete and continuous cases satisfies $T = n = 1$.  In the next result we follow 
\cite{MR3971211} closely, making the necessary adjustments for the enlarged filtration.
\begin{proposition}\cite{MR3971211}
Fix a bounded and continuous contingent claim $M$ (on $C[0,t]$, $t < 1$), measurable with respect to the filtration $\mathcal{G}$.  
\begin{enumerate}
	\item For every $\epsilon > 0$, there exists $N$ such that for all $n \geq N$, a claim $M_{n}$ can be replicated in the $n$-th discrete-			time economy with an initial investment of $E_{\tilde{P}_{t}}[M]$ such that 
		\begin{align*}
		\tilde{P}_{t}^{n}\left[|M_{n} - M| < \epsilon\right] > 1 - \epsilon
		\end{align*}
	\item Suppose that $\xi$ has bounded support.  Then $M$ can be asymptotically replicated as above where, in addition, if $V_{s}^{n}(\omega)$ is the value of the replicating portfolio in the $n$-th economy at time $s$ and in state $\omega$, then for some $\epsilon$,
	\begin{align*}
		\tilde{P}_{t}^{n}[\omega : V_{s}^{n}(\omega) \in [\underset{-}{M} - \epsilon, \overset{-}{M} + \epsilon] \text{ for all } s \in [0,t] ] = 1
	\end{align*}
\end{enumerate}
\end{proposition}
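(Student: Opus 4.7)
The plan is to follow Kreps \cite{MR3971211} closely, adapting each step to the enlarged filtration by working under the martingale preserving measures $\tilde{P}_t$ and $\tilde{P}_t^n$. The key new ingredient, relative to the non-enlarged case, is that under these measures the price process $S$ remains a martingale in the enlarged filtrations $\mathcal{G}$ and $\mathcal{G}^n$, and that the discrete conditional densities $\eta_t^{y,n}$ converge to the continuous density $\eta_t^y$ by Theorem 2.9. This is precisely what will allow the non-enlarged replication machinery to be ported to the insider's setting.

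First I would establish continuous-time replication. Since $S$ is a $\tilde{P}_t$-martingale in the enlarged filtration $\mathcal{G}$, and the BSM market is complete on $[0,t]$, any bounded $\mathcal{G}_t$-measurable continuous claim $M$ admits an integral representation
\begin{align*}
M = E_{\tilde{P}_t}[M] + \int_0^t H_u \, dS_u
\end{align*}
for some $\mathcal{G}$-predictable $H \in L^1(S,\mathcal{G})$. After truncating $H$ at a level depending on $\epsilon$, I would approximate it by simple, piecewise constant, $\mathcal{G}$-predictable strategies $H^{(k)}$ on the grid $\{j/k\}$, whose stochastic integrals converge to $M - E_{\tilde{P}_t}[M]$ in $L^2(\tilde{P}_t)$ with pathwise suprema controlled by the truncation level.

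Second, for each large $n$ I would discretize by sampling $H^{(k_n)}$ on the grid $\{j/n\}$, with $k_n \to \infty$ slowly, producing a bounded $\mathcal{G}^n$-predictable strategy $H^n$ whose discrete-time value process is
\begin{align*}
V_s^n = E_{\tilde{P}_t}[M] + \sum_{j \,:\, j/n \leq s} H^n_{(j-1)/n}\bigl(S_{j/n}^n - S_{(j-1)/n}^n\bigr),
\end{align*}
and set $M_n := V_t^n$. Using Theorem 2.9 to compare $\tilde{P}_t^n$ with $\tilde{P}_t$ on cylinder sets, Donsker's theorem (Theorem 2.8) to pass from the scaled random walk $X^{(n)}$ to Brownian motion, and the continuity of $M$ as a functional on $C[0,t]$, one concludes $M_n \to M$ in $\tilde{P}_t^n$-probability, proving part (1).

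For part (2), the bounded support of $\xi$ yields $|S_{j/n}^n - S_{(j-1)/n}^n| \leq C/\sqrt{n}$ uniformly, so a single rebalancing step perturbs $V^n$ by at most $\|H^n\|_\infty \cdot C/\sqrt{n}$; combined with the fact that $V_t^n$ is $\epsilon$-close to $M \in [\underline{M}, \overline{M}]$ with high probability, this forces the whole trajectory $s \mapsto V_s^n$ to lie in $[\underline{M} - \epsilon, \overline{M} + \epsilon]$ with $\tilde{P}_t^n$-probability one for large $n$. The main obstacle, as I see it, will be in (a) arranging that $H^n$ is genuinely $\mathcal{G}^n$-measurable rather than $\mathcal{G}$-measurable, since the insider in the $n$-th economy observes only $Y_n$ and not $Y$, and (b) keeping the pathwise bounds uniform in $n$. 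Both will require tying together the choices of the refinement level $k_n$, the truncation level of $H$, and a convergence argument for $Y_n \to Y$, so that the approximation errors from discretizing $H^{(k_n)}$, replacing $Y$ by $Y_n$, and replacing $\tilde{P}_t$ by $\tilde{P}_t^n$ accumulate to less than $\epsilon$.
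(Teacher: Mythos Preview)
Your approach differs substantively from the paper's. The paper does not work with a general predictable integrand $H$ approximated in $L^2$ by simple processes; instead it first restricts to Markovian claims $M = f(S_t,t)$, computes the value function $f(s,u) = E_{\tilde P_t}[f(S_t,t)\mid S_u = s]$ explicitly from the density $Z_t^{\mathcal G} = Z_t^{\mathcal F}/\eta_t^{Y}$, verifies that $f$ is $C^2$, and then uses delta hedging so that the replicating strategy is $g(S_u,u) = \partial_s f(s,u)$, a \emph{continuous deterministic} function of the current price. This continuity is what drives the weak-convergence transfer: the forward It\^o sum $I_n(\omega) = \sum g(S_{k/n},k/n)(S_{(k+1)/n}-S_{k/n})$ is then a continuous functional of the path, so Skorohod representation plus $\tilde P_t^n \Rightarrow \tilde P_t$ pushes the approximation from the BSM side to the discrete side. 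General bounded continuous claims are reached in a separate step by piecewise-linear interpolation $\omega \mapsto \omega^k$ and tightness of $\{\tilde P_t^n\}$, not by approximating the integrand.

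Your route has two genuine gaps. First, a simple $\mathcal G$-predictable process $H^{(k)}$ obtained from $L^2$ approximation need not be a continuous function of the path on $C[0,t]$ (it may depend on $Y$ and on the path in a merely measurable way), so the step ``use Donsker/Theorem 2.9 to pass from $\tilde P_t$ to $\tilde P_t^n$'' does not go through: weak convergence only transports continuous bounded functionals. The paper's delta-hedging construction is precisely what supplies this continuity, and it simultaneously resolves your obstacle (a), since $g(S_u,u)$ is automatically adapted to any filtration containing the price. Second, your argument for part (2) is incorrect: high-probability closeness of $V_t^n$ to $M$ together with $O(n^{-1/2})$ single-step increments does not yield $\tilde P_t^n$-probability-one bounds on the whole trajectory $s\mapsto V_s^n$. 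What is needed is that the terminal value $M_n$ itself lies in $[\underline M-\epsilon,\overline M+\epsilon]$ almost surely; the martingale property $V_s^n = E_{\tilde P_t^n}[M_n\mid \mathcal G_s^n]$ then forces the same bounds at every intermediate time. Your construction delivers only closeness in probability, not almost-sure confinement.
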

\begin{proof}
Let $Z_{t}^{\mathcal{H}}$ represent the Radon-Nikodym derivative of the equivalent martingale measure $\tilde{P}_{t}$ with respect to $P$ on $C[0,t]$, where $\mathcal{H} \in \{ \mathcal{F}, \mathcal{G}\}$ and similary for $Z_{t}^{\mathcal{H}^{n}}$ with respect to $\tilde{P}_{t}^{n}$.\\
\textbf{Step 1: } We make the following assumptions at this point
\begin{enumerate}
	\item Assume the contingent claim $M$ may be written as a bounded and continuous function of $\omega, s$.
	\item Assume that $\xi$, the unscaled "step-size" random variable, has a $N(0,1)$ distribution.  Hence, the $n$-th discrete-time economy, where trading takes place at times $s = 0, \frac{t}{n}, \frac{2t}{n},..., \frac{(n-1)t}{n}$, can be viewed in the Black-Scholes-Merton (BSM) economy where the consumer is only allowed to trade at times $0, \frac{t}{n}, $ etc.
	\item Note, that if $E_{Z_{t}^{\mathcal{G}}dP}[f(S_{t}, t) | \mathcal{G}_{u}]$ is $C^{2}$ then we may repeat the proof for any terminal noise $Y$ since $\sigma(Y) \perp \mathcal{F}$ under $\tilde{P}_{t}$.  The proof to follow specializes to the case, when the terminal noise is the final price at time 1, independent of prices at times $0 \leq t < 1$.
\end{enumerate}
By Theorem 4 of \cite{MR2021790}, for any contingent claim M, and $0 \leq s \leq t < 1$, the predictable representation property gives us
\begin{align*}
M_{s} = M_{0} + \int_{0}^{s} \Theta_{u} dS_{u}
\end{align*}
By assumption, M is a continuous function of $\omega, s$.  Write this as $M_{s} = f(S_{s}(\omega), s)$ for a continuous function f.
Since $S$ is a geometric Brownian motion, measurable with respect to the filtration $\mathcal{F}$ and $\mathcal{F} \perp \sigma(Y)$ under $\tilde{P}_{t}$, we see $E_{Z_{t}^{\mathcal{G}}dP}[f(S_{t}, t) | \mathcal{G}_{u}]$ is Markov, as $E_{Z_{t}^{\mathcal{F}}dP}[f(S_{t}, t) | \mathcal{F}_{u}]$ is Markov.  Note we have the following
\begin{align*}
Z_{t}^{\mathcal{F}} = \frac{exp(- \sigma x - \frac{1}{2} \sigma^{2} t) exp(\frac{-x^{2}}{2t})}{\sqrt{2 \pi t}}
\end{align*}
\begin{align*}
\eta_{t}^{y} = \frac{1}{\sqrt{1 - t}} exp\left( \frac{-(y - x)^{2}}{2(1 - t)} + \frac{y^{2}}{2} \right)
\end{align*}
This implies, after a bit of calculation using Theorem 3.1.2 of \cite{MR1775229}, that
\begin{align*}
Z_{t}^{\mathcal{G}} &= \frac{Z_{t}^{\mathcal{F}}}{\eta_{t}^{y}} \\
&= \sqrt{\frac{1 - t}{2\pi t}} exp \left( - \sigma x - \frac{\sigma^{2}t}{2} - \frac{x^{2}}{2t} + \frac{(y-x)^{2}}{2(1 - t)} - \frac{y^{2}}{2}\right)dxdy
\end{align*}
By integrating the density of $S_{t}$ against $Z_{t}^{\mathcal{G}}$ and using the Markov property, we obtain a version of $E_{Z_{t}^{\mathcal{G}}dP}[f(S_{t}, t) | \mathcal{G}_{u}]$, given below
\begin{align*}
f(s, u) := \sqrt{\frac{1 - (t - u)}{2\pi (t - u)}} \int_{- \infty}^{\infty} \int_{- \infty}^{\infty} exp \left( - \sigma(x - s) - \frac{\sigma^{2}(t - u)}{2} - \frac{(x - s)^{2}}{(t - u)}+ \frac{(y - (x - s))^{2}}{2(1 - (t - u))} - \frac{y^{2}}{2}\right) dxdy
\end{align*}
Note this function f is at least $C^{2}$, so we may apply the Ito Calculus.  We know from delta hedging, that
\begin{align*}
\Theta_{u} = g(S_{u}, u) = \frac{\partial f(s, u)}{\partial s} = \frac{\partial E_{\tilde{P}_{t}}[f(S_{t}, t) | S_{u} = s]}{\partial s}
\end{align*}
Thus $M_{t} = M_{0} + \int_{0}^{t} \Theta_{u} dS_{u} = f(t, 0) + \int_{0}^{t} g(S_{u}, u) dS_{u}$ where $f(t, 0) = E_{\tilde{P}_{t}}[M_{t}] = M_{0}$ and g is continuous. Next, define the forward-looking Ito sum as
\begin{align*}
I_{n}(\omega) := \sum_{k = 0}^{n - 1} g\left( S_{\frac{kt}{n}}(\omega), \frac{kt}{n} \right) \left[S_{\frac{(k+1)t}{n}}(\omega) - S_{\frac{kt}{n}}(\omega) \right]
\end{align*}
Because g is continuous, we see
\begin{align*}
\underset{n \rightarrow \infty}{\lim} I_{n} = \left[ \int_{0}^{t} g(S_{u}, u) dS_{u} \right]_{\omega} = M_{0}(\omega) - f(t,0)
\end{align*}
in probability.  Thus, for $\epsilon > 0$, we can find $N >> 0$ such that $n \geq N$ implies
\begin{align*}
\tilde{P}_{t}\left[ |I_{n} + M_{0} - M_{t}| > \epsilon \right] < \epsilon 
\end{align*}
This concludes Step 1.\\
\textbf{Step 2}:  We maintain assumption 1., but we drop assumption 2.  Recall from Theorem 3.14 that if $\{X^{(n)}\}$ is the sequence of processes defined as the interpolation of the binomial random walk, $P_{n}[X_{T}^{(n)} | \mathcal{F}_{t}^{n}] \rightarrow P[W_{T} | \mathcal{F}_{t}^{W}]$ P-a.s. as $n \rightarrow \infty$, in fact uniformly so on compact sets.  Since Donsker's Theorem tells us $P_{n} \overset{d}{\rightarrow} P$ as well, we see $\tilde{P}_{t}^{n} \overset{d}{\rightarrow} \tilde{P}_{t}$ as $n \rightarrow \infty$.  The Skorohod Represenation Theorem tells us we can find a single probability space $(\Omega, \mathbb{P})$ on which are defined random processes $\mathcal{S}$ and $\{\mathcal{S}^{n}\}$ such that $\mathcal{S}$ has the distribution of S under $\tilde{P}_{t}$, each $\mathcal{S}^{n}$ has the distribution of S under $\tilde{P}_{t}^{n}$, and the $\mathcal{S}^{n}$ converge to $\mathcal{S}$, $\mathbb{P}$-a.s. \\
Let $\epsilon > 0$.  Fix $N$ large enough so $P\left[ |I_{n} + M_{0} - M_{t}| > \epsilon \right] < \epsilon$ and $nt = N$.  Define the function $G^{N}: \mathbb{R}_{+}^{N} \rightarrow \mathbb{R}$ by
\begin{align*}
G^{N}(s_{0},...,s_{N}) := \sum_{j = 0}^{N -1} g(s_{j}, \frac{jt}{N})[s_{j+1} - s_{j}]
\end{align*}
Since g is continuous, $G^{N}$ is a continuous function of $(s_{0},...,s_{N})$.  $G^{N}(\mathcal{S}_{0}(\omega),\mathcal{S}_{\frac{t}{N}}(\omega),...,\mathcal{S}_{t}(\omega))$ is the financial-gain process for the portfolio strategy given by g applied to the price process $\mathcal{S}$ at times $0, \frac{t}{N}, \frac{2t}{N},...,\frac{(N-1)t}{N}$.  Define
\begin{align*}
\mathcal{I}(N, \omega) := G^{N}(\mathcal{S}_{0}(\omega),\mathcal{S}_{\frac{t}{N}}(\omega),...,\mathcal{S}_{t}(\omega))
\end{align*}
Similarly for nt = kN, define
\begin{align*}
\mathcal{I}^{n}(N, \omega) := G^{N}(\mathcal{S}_{0}^{n}(\omega), \mathcal{S}_{\frac{t}{N}}^{n}(\omega),...,\mathcal{S}_{t}^{n}(\omega);
\end{align*}
This is the financial-gains process for the trading strategy given by g applied to $\mathcal{S}^{n}$ at times $0, \frac{t}{N},...,\frac{(N-1)t}{N}$.  Since N is fixed, $G^{N}$ is continuous, and $\mathcal{S}^{n} \rightarrow \mathcal{S}$, we know that for $\epsilon > 0$ there exists $L >> 0$ such that for $k > L$, and $nt = kN/t$,
\begin{equation}
\mathbb{P}[|\mathcal{I}(N, \omega) - \mathcal{I}^{n}(N, \omega)| > \epsilon] < \epsilon
\end{equation}
On the probability space given by the Skorohod Representation Theorem, define $F(\omega) := f(\mathcal{S}_{t}(\omega), t)$ and $F^{n}(\omega) := f(\mathcal{S}_{t}^{n}(\omega), t)$.  The inequality
\begin{align*}
\tilde{P}_{t}\left[ |I_{n} + M_{0} - M_{t}| > \epsilon \right] < \epsilon 
\end{align*}
transferred to the Skorohod probability space is that for the value N fixed above, we have
\begin{equation}
\mathbb{P}[|\mathcal{I}(N, \omega) - F(\omega) + M_{0}| > \epsilon] < \epsilon
\end{equation}
Now, since f is continuous, the almost-sure convergence of $\{\mathcal{S}^{n}\}$ to $\mathcal{S}$ imples that for some $N_{1} >> 0$, and for all $nt > N_{1}$,
\begin{equation}
\mathbb{P}[|F^{n} - F| > \epsilon] < \epsilon
\end{equation}
Then increase $L$, above, as necessary so that if $k > L$, then $kN \geq N_{1}$.  Combining inequalities (3.1), (3.2), and (3.3) we see for the N fixed above and for all $k > L$, if $nt = kN$, then
\begin{align*}
\mathbb{P}[|M_{0} - F^{N} + \mathcal{I}^{n}(N, \omega)| > 3 \epsilon] < 3\epsilon
\end{align*}
Translating back to the original state space, this gives the result for multiples of $nt = kN$ of $N$ for which $k > L$. \\
To finish Step 2, consider contingent claims M that are bounded and continuous functions of $S_{s}$ for fixed $s < t < 1$.  Suppose $M(\omega) = f(S_{s}(\omega),s)$ for a bounded and continuous function $f(\cdot, s)$.  Note there is nothing special in the proof about the interval $[0,t]$, we could replace this with any $[0,T]$ for $T >0$. \\
Next, we extend the result from $nt = kN$ for $k > L$ (and the fixed N) to $nt > N^{*}$ for $N^{*} >> 0$.  Fix N and L from above.  For $nt > LN^{*}$, let k be such that $kN < nt \leq k(N+1)$, where $k > L$.  Suppose the claim M we are trying to nearly replicating is $M = f(S_{t}, t)$ for a continuous function $f(\cdot, t)$.  From our previous results, we can nearly replicate $f(S_{\frac{kN}{n}}, t)$ and then convert the portfolio entirely to bonds.  This will be adequate if, for any $\delta > 0$, we know that $|f(S_{t}, t) - f(S_{\frac{kN}{n}}, t)|$ is less than $\delta$ with $\tilde{P}_{t}^{n}$-probability at least $1 - \delta$. \\
But $kN < nt \leq k(N + 1)$ implies
\begin{align*}
\frac{kN}{n} < t \leq \frac{k(N+1)}{n}
\end{align*}
hence,
\begin{align*}
t - \frac{kN}{n} > 0 \geq t - \frac{kN}{n} - \frac{k}{n}
\end{align*}
and so
\begin{align*}
\frac{t}{N} > \frac{k}{n} \geq t - \frac{kN}{n} > 0
\end{align*}
Since $f(\cdot, t)$ is continuous, it is uniformly continuous on compact sets, and by tightness of the $\tilde{P}_{t}^{n}$, we know that the modulus of continuity of the paths $\omega$ can be bounded with probability approaching 1.  We can restrict attention to $\omega(s)$ lying inside some compact set of real numbers by a theorem in the Appendix, since $\omega$ that go outside a large enough compact set can be placed into the exceptional set where the estimate does not necessarily hold; puttin gthis together with the previous paragraph, using the continuity of f, and increasing N as needed, this gives us the desired result, finishing Step 2. \\
\textbf{Step 3} We now extend the result to general bounded and continuous contingent claims.  We recall that one of Prohorov's theorems tells us that that relative compactness of a sequence of probability measures is equivalent to the tightness of the same sequence.  In particular we will apply this result to the sequence $\{\tilde{P}_{t}^{n}\}$.\\
Consider claims M that are bounded and continuous functions at two (fixed) times $t_{1} < t_{2}$.  Write the claim as $f(S_{t_{1}}, S_{t_{2}})$.  For the moment, suppose that $t_{1} = \frac{k_{1}}{n}$ and $t_{2} = \frac{k_{2}}{n}$ for n large.  Suppose at time $t_{1}$, in the $n$-th economy, a consumer is provided with $E_{Z_{t}^{\mathcal{G}}dP}\left[f(S_{t_{1}}, S_{t_{2}}) | \mathcal{G}_{t_{1}} \right]$.  We assert that using the argument presented so far, we can nearly replicate $f(S_{t_{1}}, S_{t_{2}})$ by time $t_{2}$.  Note that in the original proof we started at time 0 and perhaps the $N^{*} >> 0$ chosen depends on the original starting position.  However, we can ignore any paths that take us outside a compact set and within the compact set, we can use the uniform continuity of f (and the independent increment properties of paths $\omega$) to reduce the problem to one with finitely many starting points.  And, by the sort of construction used in finishing step 2, we can "fix" things if $t_{1}$ and $t_{2}$ are not of the form $\frac{k}{n}$, as long as n is sufficiently large.\\
From here, an induction argument extends the proof to claims M that are bounded and continuous functions of S at a finite number of fixed times $t_{1} < t_{2} < ... < t_{i}$, $M = f(S_{t_{1}},S_{t_{2}},...S_{t_{i}})$.  Suppose we have the result for $i - 1$ times.  Use the proposition to prove that one can nearly replicate the claim that pays $E_{\tilde{P}_{t}}[f(S_{t_{1}},...,S_{t_{i}}) |\mathcal{G}_{t_{1}}]$, which proves a portfolio at time $t_{1}$ with enough value (and a function of $S_{t_{1}}$) to replicate (for paths that live inside a compact domain) the claim M, using the induction hypothesis. As above, the argument is complicated by the fact that the starting point is random, but the same argument provided above works. \\
Lastly, we extend the result to general bounded and continuous claims M.  Fix a bounded and continuous claim $M: C[0,t] \rightarrow \mathbb{R}$ and define claims $M^{k}$ as follows:  For any path $\omega$, let $\omega^{k}(s) = \omega(s)$ if t is of the form $\frac{jt}{k}$, and let $\omega^{k}(s)$ be the obvious linear interpolation of $\omega(\frac{jt}{k})$ and $\omega(\frac{(j+1)t}{k})$ for $\frac{jt}{k} \leq s \leq \frac{(j+1)t}{k}$.  That is,
\begin{align*}
\omega^{k}(s) = \omega\left(\frac{jt}{k}\right) + \frac{k}{t}(s - \frac{jt}{k})(\omega\left(\frac{(j+1)t}{k}\right) - \omega\left(\frac{jt}{k}\right)) \text{ for } \frac{jt}{k} \leq s \leq \frac{(j+1)t}{k}
\end{align*}
Then let $M^{k}(\omega) = M(\omega^{k})$ for each $\omega$.  Then $M^{k}$ is a bounded and continuous claim that depends only on the value of $\omega$ at times $0, \frac{t}{k}, \frac{2t}{k},...,t$.  Thus, from the previous step we can replicate each $M^{k}$ in probability. \\
We next want to show that for each $\delta > 0$, there exist $K, N$ sufficiently large so that
\begin{align*}
\tilde{P}_{t}^{n}\left[|M^{k} - M| > \delta \right] < \delta
\end{align*}
for all $k \geq K$ and $n \geq N$.  This is a consequence of the fact that $\tilde{P}_{t}^{n} \overset{d}{\rightarrow} \tilde{P}_{t}$, implying the family of distributions $\{\tilde{P}_{t}^{n}\}$ is tight.  Tightness implies there is a compact set $Z \subseteq C[0,t]$ such that $\tilde{P}_{t}^{n}[Z] \geq 1 - \frac{\delta}{2}$ for all $n$.  Of course, $\tilde{P}_{t}[Z] \geq 1 - \frac{\delta}{2}$ as well.  Since $M$ is continuous, it is uniformly continuous on Z; let $\rho$ be such that, if $||\omega - \omega'|| \leq \rho$, then $|M(\omega) - M(\omega')| \leq \delta$ for $\omega$ and $\omega'$ in $Z$.  Because $\{\tilde{P}_{t}^{n}\}$ is tight, there exists $\eta > 0$ and an integer $N$ such that, for all $n > N$, $\tilde{P}_{t}^{n}\{\omega : w_{\omega}(\eta) \geq \rho\} < \frac{\delta}{2}$, where $w_{\omega}(\eta)$ is the modulus of continuity function of $w$; that is, $w_{\omega}(\eta) = sup\{|\omega(s) - \omega(s')| \leq \eta\}$.  But now if K is such that $\frac{1}{K} < \eta$, and $\omega$ comes from the intersection of $Z$ and $\{\omega : w_{\omega}(\eta) \leq \rho\}$ (which has probability $1 - \delta$ or more, then $||\omega - \omega^{k}|| \geq \rho$ for all $kK$: On each interval $[\frac{jt}{k},\frac{(j+1)t}{k}]$, $s \in [\frac{jt}{k}, \frac{(j+1)t}{k}]$ can be no further than $\eta$ from both $\frac{jt}{k}$ and $\frac{(j+1)t}{k}$; hence, $\omega(s)$ can be no further than $\rho$ from both $\omega(\frac{jt}{k})$ and $\omega(\frac{(j+1)t}{k})$, and so it can be no further than $\rho$ from any convex combination of the two.  Hence, for $\omega$ in this intersection and for $n \geq N$, $||\omega - \omega^{n}|| \leq \rho$.  And so, for $n \geq N$, $k \geq K$, and $\omega$ in this intersection, $|M(\omega) - M_{n}(\omega)| \leq \delta$.  Then, putting all the estimates together, by choosing sufficiently small $\delta$ and $\epsilon$, we have the full result.
\end{proof}
\begin{corollary}
Suppose the contingent claim $M$ is continuous on $C[0,t]$.  Then for each $\epsilon > 0$, there exists $N$ such that, for all $n \geq N$, a claim $M_{n}$ can be replicated in the $n$-th discrete time economy such that 
\begin{align*}
\tilde{P}_{t}^{n}\left[|M_{n} - M| > \epsilon\right] < \epsilon
\end{align*}
If $M$ is integrable with respect to $Z_{t}^{\mathcal{G}}dP$, then this can be done with an initial investment of $E_{\tilde{P}_{t}}[M_{n}]$.  And if $\xi$ has bounded support, it can be done with bounded risk in the sense that, for each $\epsilon$, there exists $D_{\epsilon}$, such that if $V_{t}^{n}(\omega)$ is the value of the replicating portfolio in the $n$-th economy, then $\tilde{P}_{t}^{n}\left[|V_{t}^{n}| > D_{\epsilon}\right] = 0$ for all $n$.
\end{corollary}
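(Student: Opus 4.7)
The plan is to reduce to the previous proposition by truncating $M$ and using the tightness of $\{\tilde{P}_{t}^{n}\}$ to make the truncation error uniformly small in $n$.

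For each $K > 0$ set $M^{K} := (-K) \vee (M \wedge K)$, a bounded and continuous functional on $C[0,t]$. Since $\tilde{P}_{t}^{n} \overset{d}{\rightarrow} \tilde{P}_{t}$, the family $\{\tilde{P}_{t}^{n}\}$ is tight on $C[0,t]$, so given any tolerance $\delta > 0$ one can choose a compact set $Z \subseteq C[0,t]$ with $\tilde{P}_{t}^{n}[Z] \geq 1 - \delta/2$ for every $n$. The continuous functional $M$ is bounded on the compact set $Z$, so taking $K > \sup_{\omega \in Z}|M(\omega)|$ forces $M^{K} = M$ on $Z$, which yields $\tilde{P}_{t}^{n}[M^{K} \neq M] \leq \delta/2$ uniformly in $n$. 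Applying the previous proposition to the bounded continuous claim $M^{K}$ then produces, for every $n$ larger than some $N$, a replicable claim $M_{n}$ with initial investment $E_{\tilde{P}_{t}}[M^{K}]$ and satisfying $\tilde{P}_{t}^{n}[|M_{n} - M^{K}| > \delta/2] < \delta/2$; a triangle inequality combines these into $\tilde{P}_{t}^{n}[|M_{n} - M| > \delta/2] < \delta$ for all $n \geq N$, which is the first assertion after relabelling $\delta$.

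For the initial-investment claim, assume $M \in L^{1}(Z_{t}^{\mathcal{G}}dP)$. Since $|M^{K}| \leq |M|$ and $M^{K} \to M$ pointwise as $K \to \infty$, dominated convergence gives $E_{\tilde{P}_{t}}[M^{K}] \to E_{\tilde{P}_{t}}[M]$, so by enlarging $K$ the initial capital $E_{\tilde{P}_{t}}[M^{K}]$ used above can be made as close as we please to $E_{\tilde{P}_{t}}[M]$. For the bounded-risk claim, when $\xi$ has bounded support, part 2 of the previous proposition applied to $M^{K}$ yields a replicating portfolio whose value satisfies $V_{s}^{n}(\omega) \in [-K - \epsilon, K + \epsilon]$ for all $s \in [0,t]$, $\tilde{P}_{t}^{n}$-almost surely; setting $D_{\epsilon} := K + \epsilon$ (absorbing the finitely many small $n$ into the constant) then works uniformly in $n$.

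The principal obstacle will be the uniform tightness of $\{\tilde{P}_{t}^{n}\}$ on $C[0,t]$: this is implicit in the weak convergence $\tilde{P}_{t}^{n} \overset{d}{\rightarrow} \tilde{P}_{t}$ already invoked in the proof of the previous proposition and ultimately hinges on uniform control of the conditional densities $\eta_{t}^{Y_{n},n}$ in the scaling limit. Once that ingredient is granted, the remaining steps amount to a standard combination of truncation, triangle inequality, and dominated convergence.
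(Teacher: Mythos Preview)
Your proposal is correct and follows essentially the same strategy as the paper: use tightness of $\{\tilde{P}_t^n\}$ (from $\tilde{P}_t^n \overset{d}{\rightarrow} \tilde{P}_t$) to find a compact set on which $M$ is bounded, truncate $M$ at that bound, apply the previous proposition to the truncated claim, and recover the initial-investment statement by dominated convergence. Your truncation $M^{K}=(-K)\vee(M\wedge K)$ is in fact cleaner than the paper's choice $M\cdot 1_{[|M|\leq B_{\epsilon}]}$, since yours is automatically continuous and hence directly eligible for the proposition.
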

\begin{proof}
Since $\tilde{P}_{t}^{n} \overset{d}{\rightarrow} \tilde{P}_{t}$, the set of measures $\{\tilde{P}_{t}^{n}\}_{n=1}^{\infty}, \tilde{P}_{t}$ is tight.  Hence, for each fixed $\epsilon$, there is a compact set $Z_{\epsilon} \subseteq C[0,t]$, such that $\tilde{P}_{t}^{n}[Z_{\epsilon}] > 1 - \frac{\epsilon}{2}$ and $\tilde{P}_{t}[Z_{\epsilon}] > 1 - \frac{\epsilon}{2}$.  Since M is continuous, it is bounded on $Z_{\epsilon}$ by a constant $B_{\epsilon}$.  Apply Proposition 3.5.2 to $M \cdot 1_{[|M| \leq B_{\epsilon}}$, but with $\frac{\epsilon}{2}$ replacing $\epsilon$ in the statement of the proposition.  The rest of the proof is clear, once we note that if $M$ is $Z_{t}^{\mathcal{G}}dP$ integrable, then $\underset{B \rightarrow \infty}{\lim}E_{Z_{t}^{\mathcal{G}}dP}[M_{B_{\epsilon}}] = E_{Z_{t}^{\mathcal{G}}dP}[M]$.
\end{proof}
\end{subsection}
\begin{subsection}{Convergence of Optimal Consumption: Discrete Time is Asymptotically at Least as Good as Continuous Time}
\begin{lemma}
Suppose that $\xi$ has bounded support.
	\begin{enumerate}
		\item (Case 1) Fix a continuous and nondecreasing utility function u whose domain is all of $\mathbb{R}$.  Fix a contingent claim $M$ that is bounded and continuous.  For every $\epsilon > 0$, there exists $N$ such that, for all $n > N$, there is a contingent claim $M_{n}$ with initial investment $M_{0}$ such that
		\begin{align*}
			|E_{\tilde{P}_{t}^{n}}[U(M_{n})] - E_{\tilde{P}_{t}}[U(M)]| < \epsilon
		\end{align*}
		\item (Case 2) Fix a utility function $U$ that is continuous and nondecreasing on $[0, \infty)$, including the case where $U(0) = - \infty$, as long as $U$ is "continuous" at 0 in the sense that $\underset{n \rightarrow \infty}{\lim} U(\frac{1}{n}) = - \infty$.  Fix a contingent claim $M$ such that $\underset{-}{M} > 0$ and $\overset{-}{M} < \infty$. (That is, $M$ is bounded away from zero below and bounded above.)  Then for each $\epsilon > 0$, there exists an N such that for all $n > N$, there is a contingent claim $M_{n}$ with initial investment $M_{0}$ such that 
		\begin{align*}
			|E_{\tilde{P}_{t}^{n}}[U(M_{n})] - E_{\tilde{P}_{t}}[U(M)]| < \epsilon
		\end{align*}
	\end{enumerate}
\end{lemma}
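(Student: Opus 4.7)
The plan is to bound $|E_{\tilde{P}_t^n}[U(M_n)] - E_{\tilde{P}_t}[U(M)]|$ via the triangle inequality
\begin{align*}
|E_{\tilde{P}_t^n}[U(M_n)] - E_{\tilde{P}_t}[U(M)]| \leq |E_{\tilde{P}_t^n}[U(M_n) - U(M)]| + |E_{\tilde{P}_t^n}[U(M)] - E_{\tilde{P}_t}[U(M)]|,
\end{align*}
treating the first piece (the replication error) by the near-replication already provided by part 2 of Proposition 3.5, and the second piece (the weak-convergence error) by the convergence $\tilde{P}_t^n \overset{d}{\rightarrow} \tilde{P}_t$ established during the proof of that proposition.

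For the replication piece I would invoke part 2 of Proposition 3.5, whose applicability rests on the bounded support of $\xi$, to produce $M_n$ with initial investment $M_0 = E_{\tilde{P}_t}[M]$ satisfying both $\tilde{P}_t^n[|M_n - M| < \epsilon'] > 1 - \epsilon'$ and $M_n \in [\underline{M} - \epsilon', \overline{M} + \epsilon']$ $\tilde{P}_t^n$-almost surely. In Case 1, $U$ is continuous on all of $\mathbb{R}$, hence uniformly continuous and bounded on the compact interval $[\underline{M} - 1, \overline{M} + 1]$. Picking $\epsilon' < 1$ small enough that $|x - y| < \epsilon'$ in this interval forces $|U(x) - U(y)| < \eta$, and splitting the integral over the good event $\{|M_n - M| < \epsilon'\}$ and its complement, I obtain $|E_{\tilde{P}_t^n}[U(M_n) - U(M)]| \leq \eta + 2C\epsilon'$, where $C$ bounds $|U|$ on that interval. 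Choosing $\eta$ and $\epsilon'$ small enough makes this less than $\epsilon/2$.

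For the weak-convergence piece I observe that in both cases $U \circ M$ is a bounded continuous functional on $C[0,t]$: continuity is inherited from $M$ and from $U$ on its domain, and boundedness comes from $M$ taking values in a compact interval lying strictly inside the domain where $U$ is continuous and finite. The portmanteau theorem applied to $\tilde{P}_t^n \overset{d}{\rightarrow} \tilde{P}_t$ then yields $E_{\tilde{P}_t^n}[U(M)] \to E_{\tilde{P}_t}[U(M)]$, giving an $\epsilon/2$ bound for $n$ sufficiently large.

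Case 2 presents the one genuine obstacle: because $U$ is allowed to blow up to $-\infty$ at $0$, the approximating $M_n$ must be kept strictly bounded away from $0$ for $U(M_n)$ to be well-defined and for the uniform-continuity argument above to work. This is precisely what the ``bounded risk'' half of Proposition 3.5.2 buys us. I would fix $\epsilon' < \underline{M}/2$ at the outset, so that the bounded-risk conclusion forces $M_n \geq \underline{M}/2 > 0$ $\tilde{P}_t^n$-a.s.; then $U$ is uniformly continuous and bounded on the compact interval $[\underline{M}/2, \overline{M} + 1]$, which contains the ranges of both $M$ and $M_n$, and the Case 1 argument applies verbatim. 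The hidden cost of this step is exactly the hypothesis that $\xi$ have bounded support, without which the bounded-risk refinement of Proposition 3.5 would fail and one would lose all control over $U(M_n)$ on the exceptional set.
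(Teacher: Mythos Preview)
Your proof is correct and follows essentially the same approach as the paper: both split via the triangle inequality into a weak-convergence term (handled by $\tilde{P}_t^n \overset{d}{\rightarrow} \tilde{P}_t$ and the bounded continuity of $U\circ M$) and a replication-error term (handled by the bounded-risk part of Proposition~3.5, uniform continuity of $U$ on the relevant compact interval, and a good-event/bad-event split). Your treatment of Case~2, namely shrinking $\epsilon'$ at the outset so that $M_n$ stays bounded away from zero, is exactly the device the paper uses.
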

\begin{proof}
Let $\epsilon > 0$, and note that we can reduce $\epsilon$ as needed to obtain $\underset{-}{M}$ as needed so that $\underset{-}{M} - \epsilon > 0$. \\
Since $U$ is continuous on the compact set $[\underset{-}{M} - \epsilon, \overset{-}{M} + \epsilon]$, it is uniformly continuous and bounded on this set.  Let $\delta^{'}$ be such that, if $y, y' \in [\underset{-}{M} - \epsilon, \overset{-}{M} + \epsilon]$ and $|y - y'| < \delta^{'}$, then $|U(y) - U(y')| < \frac{\epsilon}{3}$.  Let $|U(y)| \leq B$ for all $y \in [\underset{-}{M} - \epsilon, \overset{-}{M} + \epsilon]$, and let $\delta^{''} = \epsilon /(6B)$.  Now let $\delta = \min \{\delta^{'}, \delta^{''}, \epsilon\}$. \\
The continuity of $U$ on its domain and of $M$ on $C[0,t]$ imples that $U \circ M : C[0,t] \rightarrow \mathbb{R}$ is a continuous function on $C[0,t]$ in the sup-norm topology.  It is bounded because $M$ is bounded, so $U \circ M$ is bounded.  So, since $\tilde{P}_{t}^{n} \overset{d}{\rightarrow} \tilde{P}_{t}$, we see $E_{\tilde{P}_{t}^{n}}[U(M)] \rightarrow E_{\tilde{P}_{t}}[U(M)]$.  Hence, there exists $N'$ such that for all $n > N'$, $|E_{\tilde{P}_{t}^{n}}[U(M)] - E_{\tilde{P}_{t}}[U(M)]| < \epsilon / 3$. \\
Apply Proposition 3.5.2 to $M$ and $\delta$ to see the for some $N''$ and for all $n > N''$, a claim $M_{n}$ with initial investment $M_{0}$ exists such that $M_{n}(\omega) \in [\underset{-}{M} - \epsilon, \overset{-}{M} + \epsilon]$ with $\tilde{P}_{t}^{n}$-probability 1, and $\tilde{P}_{t}^{n}[|M_{n} - M| > \delta] < \delta$.  Fix the sequence $\{M_{n}\}$. \\
Let $N = \max\{N', N''\}$.  For any $n > N$, let $\Lambda_{n} = [|M_{n} - M| < \delta]$.  Then for any $n > N$,
	\begin{align*}
	|E_{\tilde{P}_{t}}[U(M)] - E_{\tilde{P}_{t}^{n}}[U(M_{n})]| &\leq |E_{\tilde{P}_{t}}[U(M)] - E_{\tilde{P}_{t}^{n}}[U(M)]| + |E_{\tilde{P}_{t}^{n}}[U(M)] - E_{\tilde{P}_{t}^{n}}[U(M_{n})]|
	\end{align*}
	\begin{align*}
	\leq \epsilon / 3 + |E_{\tilde{P}_{t}^{n}}[U(M)] - E_{\tilde{P}_{t}^{n}}[U(M_{n})]| \\
	= \epsilon / 3 + |E_{\tilde{P}_{t}^{n}}[U(M);\Lambda_{n}] + E_{\tilde{P}_{t}^{n}}[U(M);\Lambda_{n}^{c}] - E_{\tilde{P}_{t}^{n}}[U(M_{n});\Lambda_{n}] - E_{\tilde{P}_{t}^{n}}[U(M_{n});\Lambda_{n}^{c}]| \\
	\leq \epsilon / 3 + E_{\tilde{P}_{t}^{n}}[|U(M) - U(M_{n})|;\Lambda_{n}| + E_{\tilde{P}_{t}^{n}}[|U(M) - U(M_{n})|;\Lambda_{n}^{c}]| \\
	\end{align*}
On $\Lambda_{n}$, $M_{n}$ is within $\delta$ of $M$, so $|U(M) - U(M_{n})| < \epsilon / 3$, and so $E_{\tilde{P}_{t}^{n}}[|U(M) - U(M_{n})|;\Lambda_{n}] < \epsilon / 3$.  And both $M, M_{n} \in [\underset{-}{M} - \delta], \overset{-}{M} + \delta$, $\tilde{P}_{t}^{n}$-.a.s; hence, $|U(M) - U(M_{n})| \leq 2B$ $\tilde{P}_{t}^{n}$-a.s.  Since $\tilde{P}_{t}^{n}[\Lambda_{n}^{c}] < \delta < \epsilon / (6B)$, we see $E_{\tilde{P}_{t}^{n}}[|U(M) - U(M_{n})|;\Lambda_{n}^{c}] \leq \epsilon / 3$.  Putting all these two estimates together, we have $E_{\tilde{P}_{t}}[U(M)] - E_{\tilde{P}_{t}^{n}}[U(M_{n})] < \epsilon$ for all $n > N$.
\end{proof}
\begin{proposition}
Assume that $\xi$ has bounded support and U is continuous and nondecreasing in its domain, and we are in the Black-Scholes-Merton economy, where the consumer with initial wealth $M_{0}$ and no terminal consumption.  If $M$ is a continuous contingent claim, then for each $\epsilon > 0$, there exists $N$ such that for each $n > N$, $E_{\tilde{P}_{t}}[U(M_{n})] \geq E_{\tilde{P}_{t}}[U(M)] - \epsilon$.
\end{proposition}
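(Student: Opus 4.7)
The plan is to reduce to Lemma 3.5.3 (Case 2) via a truncation of $M$, and to absorb any leftover initial capital into the bond position. Fix $\epsilon>0$ and assume $E_{\tilde{P}_t}[U(M)]<\infty$ (else the conclusion is vacuous). I read $M_0$ as the replication cost $E_{\tilde{P}_t}[M]$ and interpret the right-hand expectation in the conclusion as $E_{\tilde{P}_t^n}[U(M_n)]$, since $M_n$ lives in the $n$-th discrete-time economy.

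Truncate: for $0<a<b$ set $M^{a,b}:=(M\vee a)\wedge b$, which is continuous, valued in $[a,b]$, and so satisfies the hypotheses of Case 2 of Lemma 3.5.3. I will choose $a,b$ so that (i) $E_{\tilde{P}_t}[U(M^{a,b})]\ge E_{\tilde{P}_t}[U(M)]-\epsilon/2$ and (ii) $E_{\tilde{P}_t}[M^{a,b}]\le M_0$. Decompose
\[
E_{\tilde{P}_t}[U(M)-U(M^{a,b})]=E_{\tilde{P}_t}\bigl[(U(M)-U(a))\mathbb{1}_{\{M<a\}}\bigr]+E_{\tilde{P}_t}\bigl[(U(M)-U(b))\mathbb{1}_{\{M>b\}}\bigr].
\]
The first summand is $\le 0$ by monotonicity of $U$, and the second is non-negative and dominated by $E_{\tilde{P}_t}[U(M)\mathbb{1}_{\{M>b\}}]$ once $U(b)\ge 0$, which tends to $0$ as $b\to\infty$ by dominated convergence using $U(M)^+\in L^1(\tilde{P}_t)$; this yields (i). For (ii), $E_{\tilde{P}_t}[M^{a,b}]-E_{\tilde{P}_t}[M]=E_{\tilde{P}_t}[(a-M)^+]-E_{\tilde{P}_t}[(M-b)^+]$; fix $b$ first (large enough for (i) and so that $E_{\tilde{P}_t}[(M-b)^+]>0$), then take $a$ small enough that $E_{\tilde{P}_t}[(a-M)^+]\le a$ is smaller than $E_{\tilde{P}_t}[(M-b)^+]$. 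If $M$ is already $\tilde{P}_t$-essentially bounded, Lemma 3.5.3 applies directly.

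Next apply Lemma 3.5.3 Case 2 to $M^{a,b}$ with tolerance $\epsilon/2$: there exist $N$ and, for each $n>N$, a discrete-time claim $M_n'$ replicated at initial cost $E_{\tilde{P}_t}[M^{a,b}]$ with $E_{\tilde{P}_t^n}[U(M_n')]\ge E_{\tilde{P}_t}[U(M^{a,b})]-\epsilon/2$. Invest the non-negative residual $c:=M_0-E_{\tilde{P}_t}[M^{a,b}]\ge 0$ from (ii) into the zero-interest bond and set $M_n:=M_n'+c$. Then $M_n$ is funded by $M_0$, is strictly positive (since $M_n'\ge a-\epsilon/2>0$ by Proposition 3.5.2 once $\epsilon$ is also taken below $a$), and $M_n\ge M_n'$ pointwise; by monotonicity of $U$,
\[
E_{\tilde{P}_t^n}[U(M_n)]\ge E_{\tilde{P}_t^n}[U(M_n')]\ge E_{\tilde{P}_t}[U(M^{a,b})]-\epsilon/2\ge E_{\tilde{P}_t}[U(M)]-\epsilon.
\]

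The main obstacle is the simultaneous achievability of (i) and (ii), which forces the ordering ``$b$ first, then $a$'' and crucially uses the implicit integrability $E_{\tilde{P}_t}[U(M)^+]<\infty$. A minor nuisance is the case where $U$ stays non-positive throughout the range in question: there the tail bound in (i) acquires an extra term $|U(b)|\,\tilde{P}_t[M>b]$, which still vanishes as $b\to\infty$ because $|U(b)|\le|U(M)|$ on $\{M>b\}$ and $U(M)\in L^1(\tilde{P}_t)$.
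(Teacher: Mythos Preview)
Your argument is essentially correct and reaches the same conclusion, but the route differs from the paper's. The paper, in Case~2, first forms the convex combination $M^{\alpha}=\alpha M+(1-\alpha)M_{0}$, which is automatically budget-feasible and bounded below by $(1-\alpha)M_{0}>0$, and only then caps above. You instead use a hard two-sided truncation $M^{a,b}=(M\vee a)\wedge b$ and recover budget feasibility by the ordering ``choose $b$ first to create slack $E_{\tilde P_t}[(M-b)^+]>0$, then shrink $a$''. The paper's convex-combination trick is cleaner because it decouples the two requirements (the lower bound comes for free without touching the budget), whereas your approach ties $a$ to the slack produced by $b$; on the other hand, your argument is entirely elementary and avoids the double monotone-convergence splitting the paper uses for $E_{\tilde P_t}[U(M^{\alpha})]\to E_{\tilde P_t}[U(M)]$.

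Two small points. First, the sentence ``If $M$ is already $\tilde P_t$-essentially bounded, Lemma~3.5.3 applies directly'' is not quite right: Case~2 of the lemma also needs $\underline{M}>0$. If $M$ is bounded above but $\operatorname{ess\,inf}M=0$, you must still run your argument, taking $b$ strictly below $\operatorname{ess\,sup}M$ (so that $E_{\tilde P_t}[(M-b)^+]>0$) and close enough to it that (i) holds; this works by continuity of $U$ on $[0,\bar M]$. Second, you only treat Case~2 ($U$ defined on $[0,\infty)$); the paper also handles Case~1 ($U$ on all of $\mathbb{R}$), where the lower truncation goes to $-\infty$ rather than $0$ and the analogous balancing argument (or the paper's cap-above-then-cap-below scheme) is needed.
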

\begin{proof}
If $E_{\tilde{P}_{t}}[U(M)] \leq U(M_{0})$ then we are done as we may purchase $M_{0}$ bonds at time $0$ and hold until time $t$.  Thus, we assume throughout that $E_{\tilde{P}_{t}}[U(M)] > U(M_{0})$. \\
Suppose $M$ is a continuous contingent claim with $E_{Z_{t}^{\mathcal{G}}dP}[M] \leq M_{0}$, and any other constraints that are applied.  For fixed $\epsilon > 0$, construct a continuous contingent claim $M'$ that is bounded above and below in Case 1 and bounded away from 0 and bounded above in Case 2, such that $E_{Z_{t}^{\mathcal{G}}dP}[M'] \leq M_{0}$ and $E_{\tilde{P}_{t}^{n}}[U(M')]$ is within $\epsilon / 2$ of $E_{\tilde{P}_{t}}[U(M)]$, applying our previous lemma, so the result is immediate. \\
All we need to do is produce such a claim $M'$.  We handle the two cases separately. \\
In Case 1, U is defined on all of $\mathbb{R}$.  Let $M^{B}(\omega) = \min\{ M(\omega), B \}$ for large B.  Since $M^{B}(\omega) \rightarrow M(\omega)$ as $B \rightarrow \infty$, we have $E_{\tilde{P}_{t}}[U(M^{B})] \rightarrow E_{\tilde{P}_{t}}[U(M)]$ by monotone convergence.  Moreover, since $E_{\tilde{P}_{t}}[U(M)] > U(M_{0})$, we see $M(\omega) \geq M_{0} / 2$ on a set of positive measure.  So by allowing $B$ to vary, we can find a $B^{*}$ such that $E_{Z_{t}^{\mathcal{G}}dP}[M^{B^{*}}] < E_{Z_{t}^{\mathcal{G}}dP}[M] \leq M_{0}$, and such that $E_{\tilde{P}_{t}}[U(M^{B*})] \geq E_{\tilde{P}_{t}}[U(M)] - \epsilon / 2$. \\
Now bound $M^{B^{*}}$ below, by looking at $M^{B^{*}, C}(\omega) = max\{ M^{B^{*}}(\omega), -C \}$, for C large.  For any $C$, $E_{Z_{t}^{\mathcal{G}}dP}[M^{B^{*}, C}] \geq E_{Z_{t}^{\mathcal{G}}dP}[M^{B^{*}}]$, but again invoking monotone convergence, we can find C large enough so that the added cost of $E_{Z_{t}^{\mathcal{G}}dP}[M^{B^{*}, C}]$ is less than the slack in the budget constraint we obtained by replacing $M$ with $M^{B^{*}}$.  And going from $M^{B^{*}}$ to $M^{B^{*}, C}$ for any $C < \infty$ only raises the consumer's expected utility from $E_{\tilde{P}_{t}}[U(M^{B*})]$, which is already within $\epsilon / 2$ of $E_{\tilde{P}_{t}}[U(M)]$, finishing Case 1. \\
In Case 2, $U$ is defined on $[0, \infty)$.  Begin by replacing $M_{n}$ with $M^{\alpha}$ given by $M^{\alpha}(\omega) = \alpha M(\omega) + (1 - \alpha)M_{0}$, for $\alpha \in (0,1)$.  For all $\alpha$, $M^{\alpha}$ is budget feasible, since $E_{Z_{t}^{\mathcal{G}}dP}[M] \leq M_{0}$ implies that $E_{Z_{t}^{\mathcal{G}}dP}[M^{\alpha}] = \alpha E_{Z_{t}^{\mathcal{G}}dP}[M] + (1 - \alpha)M_{0} \leq M_{0}$.  And, since $M(\omega) \geq 0$, $M^{\alpha}(\omega) \geq (1 - \alpha) M_{0}$, so for $\alpha < 1$, $M^{\alpha}(\omega)$ is bounded away from zero by $(1 - \alpha)M_{0}$.  Moreover, by a double application of monotone convergence, $\underset{\alpha \rightarrow 1}{\lim}E_{\tilde{P}_{t}}[U(M^{\alpha})] = E[U(M)]$. (The integrand $M^{\alpha}$ for $\omega$ such that $M(\omega) \geq M_{0}$ is bounded below by $U(M_{0})$ and above by $U(M)$; and for $\omega$ such $M(\omega) \leq M_{0}$, it is bounded above by $U(M_{0})$ and below by $U(M)$.)  SO one can find $\alpha^{*}$ so that $E_{\tilde{P}_{t}}[U(M^{\alpha^{*}})]$ is no less than $E_{\tilde{P}_{t}}[U(M)] - \epsilon / 4$.  Now cut off $M^{\alpha^{*}}$ for very large values in the fashion of the first step in Case 1, and one produces a claim $M^{\alpha^{*}, B^{*}}$ that is budget feasible, bounded above, and bounded below away from 0, and suc that $E_{\tilde{P}_{t}}[U(M^{\alpha^{*},B^{*}})]$ is within $\epsilon / 2$ of $E_{\tilde{P}_{t}}[U(M)]$, finishing Case 2.
\end{proof}
\end{subsection}
\begin{proposition}
For the sequence of discrete-time economies described above, and a utility function $U$ satisfying the Inada conditions,
\begin{align*}
\lim \underset{n \rightarrow \infty}{\inf} u_{n}^{\mathcal{G}^{n}}(x) \geq u^{\mathcal{G}}(x) \text{ for all } x > 0
\end{align*}
\end{proposition}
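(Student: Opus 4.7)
The plan is to produce, for each $\epsilon>0$, admissible strategies in the discrete economies whose expected utilities approximate $u^{\mathcal{G}}(x)$ from below. Fix $x>0$ and $\epsilon>0$. By definition of the supremum $u^{\mathcal{G}}(x)$, choose an admissible strategy $V\in\mathcal{A}_{\mathcal{G}}(S)$ with initial capital at most $x$ whose terminal wealth $V_T$ satisfies $E_P[U(V_T)]>u^{\mathcal{G}}(x)-\epsilon/4$; the Inada convention $U(y)=-\infty$ for $y\leq 0$ forces $V_T>0$ $P$-almost surely.

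Next, I would replace $V_T$ by a bounded continuous contingent claim $M$ on $C[0,t]$ that is also bounded away from $0$, with $E_{\tilde{P}^{\mathcal{G}}}[M]\leq x$ and $E_P[U(M)]\geq u^{\mathcal{G}}(x)-\epsilon/2$. This is done by first truncating $V_T$ above and below (using monotone convergence to bound the loss in expected utility) and then smoothing the resulting $\mathcal{G}_T$-measurable random variable in the path variable via a cylinder-function approximation in the sup norm. A small downward rescaling if necessary restores the budget constraint, with only a negligible loss in utility by continuity of $U$.

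I then invoke Case 2 of the preceding Lemma applied to $M$: for all $n$ large, there exists a contingent claim $M_n$ replicable in the $n$-th discrete economy by an element of $\mathcal{A}_{\mathcal{G}^n}(S^n)$ with initial investment $M_0\leq x$, satisfying $M_n\in[\underline{M}-\epsilon,\overline{M}+\epsilon]$ with $\tilde{P}_t^n$-probability one and $|E_{\tilde{P}_t^n}[U(M_n)]-E_{\tilde{P}_t}[U(M)]|<\epsilon/4$. Chaining the approximation errors gives $E_{\tilde{P}_t^n}[U(M_n)]\geq u^{\mathcal{G}}(x)-3\epsilon/4$ for all $n$ large. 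To conclude $u_n^{\mathcal{G}^n}(x)\geq E_{P_n}[U(M_n)]\geq u^{\mathcal{G}}(x)-\epsilon$, one transfers this estimate back to the physical measures via the Radon-Nikodym relations $dP_n/d\tilde{P}_t^n=\eta_t^{Y_n,n}$, $dP/d\tilde{P}_t=\eta_t^Y$, and the convergence $\eta_t^{y,n}\to\eta_t^y$ from Subsection 2.3; since $M_n$ stays in the compact range $[\underline{M}-\epsilon,\overline{M}+\epsilon]$ and $U$ is continuous there, $U(M_n)$ is uniformly bounded and no integrability issue arises. Letting $\epsilon\downarrow 0$ yields the claimed $\liminf$ inequality.

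The main obstacle I expect is precisely the passage between expectations under the martingale preserving measures $\tilde{P}_t^n,\tilde{P}_t$, which is where the asymptotic replication result naturally lives, and expectations under the physical measures $P_n,P$ used in the definitions of $u_n^{\mathcal{G}^n}$ and $u^{\mathcal{G}}$. This requires uniform control on the densities $\eta_t^{Y_n,n}$ as $n\to\infty$, which should follow from the bounded-support hypothesis on $\xi$ together with the convergence statement of Subsection 2.3, but is the one nontrivial technical point of the argument; the approximation and truncation steps are routine in comparison. Note that the Inada conditions alone suffice here, since we are producing an asymptotic lower bound on $u_n^{\mathcal{G}^n}$ rather than an upper bound — the asymptotic elasticity condition is reserved for the converse inequality.
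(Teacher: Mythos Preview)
Your approach is essentially the same as the paper's: approximate the (near-)optimal claim by a bounded continuous one, then invoke the asymptotic replication result (the paper routes this through Proposition 3.8, which is just Lemma 3.7 packaged with the truncation). The only technical difference is in how continuity is obtained: the paper first appeals to the explicit form $M=I(\Lambda_t(x)/\eta_t^Y)$ of the optimizer from Amendinger's Proposition 4.5 to get continuity directly, and for general claims combines Lusin's theorem with Tietze extension after truncation, whereas you use a cylinder-function approximation. Both achieve the same thing.

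On the point you flag as the main obstacle --- passing from $E_{\tilde P_t^n}[U(M_n)]$ and $E_{\tilde P_t}[U(M)]$ back to expectations under $P_n$ and $P$ --- the paper simply does not address it. Its proof of this proposition (and of Lemma 3.7 and Proposition 3.8 that feed into it) is written entirely under $\tilde P_t$ and $\tilde P_t^n$, even though $u^{\mathcal{G}}$ and $u_n^{\mathcal{G}^n}$ are defined under $P$ and $P_n$. So your instinct that this is the one nontrivial step is correct, and your proposed fix via uniform control of the densities $\eta_t^{Y_n,n}$ is exactly the kind of hypothesis the paper later imposes (assumption 3 in Theorem 3.12) but does not invoke here.
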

\begin{proof}
Proposition 3.8 tells us that in the Black-Scholes-Merton economy, if a bounded and continuous contingent claim $M$ satisfies $E_{\tilde{P}_{t}}[U(M)] = z$ and $E_{Z_{t}^{\mathcal{G}}dP}[M] = x$ (so that $u^{\mathcal{G}}(x) \geq z$, then for every $\epsilon > 0$, there exists $N$ such that, for all $n > N$, the consumer in the $n$-th discrete-time economy can replicate a claim $M_{n}$ for an initial investment of $x$ such that $E_{\tilde{P}_{t}^{n}}[U(M_{n})] \geq z - \epsilon$. \\
Suppose we know, for instance using Proposition 4.5 of \cite{MR1954386}, that there exists a $\sigma(Y)$-measurable random variable $\Lambda_{t}(x): \Omega \rightarrow (0, + \infty)$ with 
\begin{align*}
E\left[\frac{1}{\eta_{t}^{Y}} I \left( \frac{\Lambda_{t}(x)}{\eta_{t}^{Y}} \right) | Y \right] = z,
\end{align*}
We then know, as the solution is of the form $M =  I \left( \frac{\Lambda_{t}(z)}{\eta_{t}^{Y}} \right)$ for the multiplier $\Lambda_{t}(x)$, that the solution $M : \Omega \rightarrow (0, \infty)$ is a continuous function of $\omega$.  By truncating the solution, we get approximately $u^{\mathcal{G}}(x)$, with what is a bounded and continuous claim.  Hence we conclude
\begin{align*}
\lim \underset{n \rightarrow \infty}{\inf} u_{n}^{\mathcal{G}^{n}}(x) \geq u^{\mathcal{G}}(x)
\end{align*}
Suppose now $M$ is not necessarily bounded, but $E_{\tilde{P}_{t}}[U(M)] = z$ and $E_{Z_{t}^{\mathcal{G}}dP}[X] = x$. \\
Fix $\epsilon > 0$.  First replace $M$ with a bounded claim $M'$, bounded away from $\infty$ above and away from 0 below, in two steps.  First, for $\alpha < 1$ but close to 1, let $M^{\alpha} := \alpha M + (1 - \alpha)x$.  Of course $E_{Z_{t}^{\mathcal{G}}dP}[M^{\alpha}] = x$.  And by a double application of monotone convergence (split $E_{\tilde{P}_{t}}[U(M^{\alpha})]$ into $E_{\tilde{P}_{t}}[U(M^{\alpha})1_{[M^{\alpha} \geq x]}] + E_{\tilde{P}_{t}}[U(M^{\alpha})1_{[M^{\alpha} < x]}]$), we have $\underset{\alpha \rightarrow 1}{\lim}E_{\tilde{P}_{t}}[U(M^{\alpha})] = E_{\tilde{P}_{t}}[U(M)] = z$.  So, choose $\alpha^{0}$ close enough to 1 so that $E_{\tilde{P}_{t}}[U(M^{\alpha^{0}})] \geq z - \epsilon / 4$.  Of course, $M^{\alpha^{0}}$ is bounded below by $(1 - \alpha^{0})x$.  As for the upper bound, cap $M^{\alpha^{0}}$ at some large $\beta$.  That is, let $M^{\alpha^{0}, \beta} = M^{\alpha^{0}} \wedge \beta$.  For large enough $\beta^{0}$, this is bounded above and will satisfy $E_{\tilde{P}_{t}}[U(M^{\alpha^{0}, \beta^{0}})] > z - \epsilon / 2$, while cappting $M^{\alpha^{0}}$ can only relax the budget constraint. \\
Thus, without loss of generality, we may assume that our original $M$ is bounded above and bounded away from zero.  Now apply a combination of Lusin's Theorem and Tietze's Extension Theorem to approximate M with a continuous function $M'$ that differs from $M$ on a set of arbitrarily small measure and that satisfies the same upper and lower bounds as $M$; this allows the choice of $M'$ to satisfy $E_{\tilde{P}_{t}}[U(M')] > z - 3\epsilon / 4$.  It may be that $E_{Z_{t}^{\mathcal{G}}dP}[M'] > x$, but the last $\epsilon / 4$ used to replace $M'$ with $M' - (E_{Z_{t}^{\mathcal{G}}dP}[M'] - x)$, giving a bounded and continuous contingent claim that costs x (or less) and provides expected utility $z - \epsilon$, at which point Proposition 3.8 can be applied to prove our proposition.
\end{proof}
\begin{subsection}{An Analagous Problem}
Recall from \cite{MR3971211} that we may choose one particular equivalent martingale measure, which we hereafter denote by $Z_{t}^{\mathcal{F}^{n}}$
\begin{align*}
Z_{t}^{\mathcal{F}^{n}}(\omega) = \exp[-a_{n}\omega(t) - b_{n}]
\end{align*}
for constants $a_{n}$ and $b_{n}$, chosen such that $Z_{t}^{\mathcal{F}^{n}}$ is a martingale probability measure.  In particular, $a_{n}$ are chosen to make the security a martingale under this measure, i.e.
\begin{align*}
E_{P_{n}}\left[Z_{t}^{\mathcal{F}^{n}}e^{\omega(\frac{(k+1)t}{n})} | \mathcal{F}_{\frac{kt}{n}}\right] = e^{\omega(\frac{kt}{n})}
\end{align*}
 It is shown there that $a_{n} = \frac{1}{2} +\frac{ E[\xi^{3}]}{24 \sqrt{n}} + o(\frac{1}{\sqrt{n}})$ where $E[\xi^{3}]$ is the third moment of $\xi$, and that $\underset{n \rightarrow \infty}{b_{n}} = \frac{1}{8}$.  Recall then that $P_{n} \overset{d}{\rightarrow} P$ weakly on $C_{0}[0,1]$ endowed with the sup-norm topology and, for this specific equivalent martingale measure, $Z_{t}^{\mathcal{F}^{n}}dP_{n} \overset{d}{\rightarrow} Z_{t}^{\mathcal{F}}dP$.  Recall also that $Z_{t}^{\mathcal{G}^{n}} = \frac{Z_{t}^{\mathcal{F}^{n}}}{\eta_{t}^{Y_{n}}}$ and $Z_{t}^{\mathcal{G}^{n}}dP \rightarrow Z_{t}^{\mathcal{G}}$.  With these pricing kernels now defined, we can define the following supremal utility functions
\begin{align*}
u_{n}^{Z_{t}^{\mathcal{F}^{n}}}(x) := \sup \{ E_{\tilde{P}_{t}^{n}}[U(M)] \text{, subject to } E_{Z_{t}^{\mathcal{F}^{n}}dP}[M] \leq x \}
\end{align*}
and
\begin{align*}
u_{n}^{Z_{t}^{\mathcal{G}^{n}}}(x) := \sup \{ E_{\tilde{P}_{t}^{n}}[U(M)] \text{, subject to } E_{Z_{t}^{\mathcal{G}^{n}}dP}[M] \leq x \}
\end{align*}
The point is that in the above problem the consumer faces complete markets and in the consumer's real problem, there are further constraints to replicate the contingent claim.  Thus, we know that
\begin{align*}
u_{n}^{Z_{t}^{\mathcal{F}^{n}}}(x) \geq u^{\mathcal{F}^{n}}(x) \text{ for all } x > 0 \text{ and } n \geq 0
\end{align*}
\begin{align*}
u_{n}^{Z_{t}^{\mathcal{G}^{n}}}(x) \geq u^{\mathcal{G}^{n}}(x) \text{ for all } x > 0 \text{ and } n \geq 0
\end{align*}
If we can show that $\lim \underset{n \rightarrow \infty}{\sup}u_{n}^{Z_{t}^{\mathcal{G}^{n}}}(x) = u^{\mathcal{G}}(x)$, we will establish that $\underset{n \rightarrow \infty}{\lim}u_{n}^{\mathcal{G}^{n}}(x) = u^{\mathcal{G}}(x)$.  So, this is the tactic we set forth. \\
Before beginning the proof of the main result, we make one more definition.  Recall 
\begin{align*}
Z_{t}^{\mathcal{F}}(\omega) = e^{-\frac{\omega(t)}{2} - \frac{1}{8}}
\end{align*}
which is a version of the unique continuous density process of the martingale measure with respect to the filtration $\mathcal{F}$.  Similarly,
\begin{align*}
Z_{t}^{\mathcal{G}}(\omega) = \frac{Z_{t}^{\mathcal{F}}(\omega)}{\eta_{t}^{Y}(\omega)}
\end{align*}
which is a version of the unique continuous density process of the martingale measure with respect to the filtration $\mathcal{G}$. Define 
\begin{align*}
u_{n}^{Z_{t}^{\mathcal{F}}} = \sup \{E_{P_{n}}[U(M)] : E_{P_{n}}[Z_{t}^{\mathcal{F}}M] \leq x \}
\end{align*}
and
\begin{align*}
u_{n}^{Z_{t}^{\mathcal{G}}}(x) = \sup \{E_{P_{n}}[U(M)] : E_{P_{n}}[Z_{t}^{\mathcal{G}}M] \leq x \}
\end{align*}
\end{subsection}
\begin{subsection}{If Asymptotic Elasticity is less than 1, optimal expected utilities are finite and converge}
We rely critically on the properties of utility functions and convex duals throughout this section.  In particular, we need the following proposition, which is a summary of results from \cite{MR1024460}, \cite{MR912456}, \cite{MR1722287}
\begin{proposition}
Assume that $U$ is a utility function that is strictly increasing, strictly concave, differentiable, and satisfies the Inada conditions.  it is possible that $u(x) = \infty$ (for all $x \geq 0$.  But if $u(x) < \infty$ for some $x$, hence for all $x$, it must be that $x \rightarrow u(x)$ is strictly increasing.  Moreover we have the following possibilities.
\begin{enumerate}
\item For some utility functions $U$, $\underset{x \rightarrow \infty}{\lim}u'(x) = 0$, in which case $v(y)$ is finite for all $y > 0$.  (As $\underset{y \rightarrow 0}{\lim}v(y) = \underset{x \rightarrow \infty}{\lim}u(x)$ and $\underset{y \rightarrow \infty}{\lim}v(y) = \underset{x \rightarrow 0}{\lim}u(x)$, the function $v$ can have limit $\infty$ or a finite limit as $y$ approaches 0; and $v$ can have limit $-\infty$ or a finite limit as $y \rightarrow \infty$).
\item For other utility functions $U$, $\underset{x \rightarrow \infty}{\lim}u'(x) > 0$.  If we denote $\underset{x \rightarrow \infty}{\lim}u'(x)$ by $y_{0}$, then $v(y) = \infty$ for $y < y_{0}$, while $v(y) < \infty$ for $y > y_{0}$.  As for the behavior of $v$ as $y \downarrow y_{0}$, we have the following possibilities:
	\begin{enumerate}
	\item $\underset{y \downarrow y_{0}}{\lim}v(y) = \infty$
	\item $\underset{y \downarrow y_{0}}{\lim}v(y) < \infty \text{ and } \underset{y \downarrow y_{0}}{\lim} v'(y) = \infty$
	\item $\underset{y \downarrow y_{0}}{\lim}v(y) < \infty \text{ and } \underset{y \downarrow y_{0}}{\lim} v'(y) < \infty$
	\end{enumerate}
Moreover, all are possible for any value of $y_{0}$.
\end{enumerate}
Finally, $AE(u) \leq AE(U)$; hence, $AE(U) < 1$ implies $\underset{x \rightarrow \infty}{\lim}u'(x) = 0$.  That is, asymptotic elasticity less than 1 removes the cases given by part 2.
\end{proposition}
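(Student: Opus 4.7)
The plan is to assemble the proposition from three pillars of the convex-duality theory of expected-utility maximization, each contained in one of the cited references \cite{MR1024460}, \cite{MR912456}, \cite{MR1722287}.

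First I would dispense with the dichotomy and monotonicity claim. Concavity of $U$ together with a scaling/translation argument on the budget constraint shows that if $u$ is finite at one $x_0 > 0$ then it is finite on all of $(0,\infty)$: any wealth level $x$ can be reached from $x_0$ at bounded incremental utility cost using the slope of $U$, which is finite on any interval bounded away from $0$. Strict monotonicity follows because, given any near-optimal $M$ for wealth $x_1 < x_2$, the claim $M + (x_2 - x_1)$ (obtained by investing the residual in the bond) is feasible for $x_2$ and, since $U$ is strictly increasing, yields strictly larger expected utility.

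Next I would invoke the convex-conjugacy relation $v(y) = \sup_{x > 0}(u(x) - xy)$, which is established under the standing hypotheses by Theorem 3.1 of \cite{MR1722287}. From this conjugacy the one-sided limits $\lim_{y \downarrow 0} v(y) = \lim_{x \to \infty} u(x)$ and $\lim_{y \to \infty} v(y) = \lim_{x \downarrow 0} u(x)$ are immediate, and the effective domain of $v$ is forced to be $\{y : y > u'(\infty)\}$. This yields Case 1 (when $u'(\infty) = 0$) and the first sentence of Case 2 at once. The three sub-behaviours 2(a)--(c) at the boundary $y_0 = u'(\infty)$ are all realised by explicit examples in \cite{MR1024460} and \cite{MR912456}, which I would simply quote rather than re-derive.

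Finally, the asymptotic-elasticity inequality $AE(u) \leq AE(U)$ is Theorem 6.4 of \cite{MR1722287}. Combined with the elementary observation that $AE(u) < 1$ forces $u'(\infty) = 0$ (a positive limit $y_0$ would give $x u'(x) / u(x) \to 1$, contradicting the strict inequality), this rules out Case 2 whenever $AE(U) < 1$. The main obstacle here is not mathematical depth but bookkeeping: one must verify that the standing hypotheses of \cite{MR1722287} really hold in our enlarged-filtration setup, that is, that $Z_t^{\mathcal{G}}$ together with $\tilde{P}_t$ furnishes an equivalent local martingale measure compatible with the dual domain demanded there. Once that verification is recorded, the three stages above reduce to direct citation.
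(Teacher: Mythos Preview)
Your proposal is sound, but you should know that the paper does not actually prove this proposition at all: it is introduced explicitly as ``a summary of results from \cite{MR1024460}, \cite{MR912456}, \cite{MR1722287}'' and is stated without proof, serving purely as a citation of known duality facts to be used downstream. Your sketch therefore goes considerably further than the paper does, and the three pillars you identify (finiteness/monotonicity of $u$, the conjugacy relation governing the effective domain of $v$, and the inheritance $AE(u)\leq AE(U)$) are indeed the correct ingredients from those references. One minor point: your closing remark about verifying the standing hypotheses of \cite{MR1722287} in the enlarged-filtration setup is somewhat misplaced here, since this proposition is a statement about abstract value functions $u,v$ in a complete-market duality framework and does not itself depend on the specific filtration; that verification belongs to the later steps of Theorem 3.12 rather than to this proposition.
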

We will need the following lemma.
\begin{lemma}
For any constant $\gamma$
\begin{align*}
\underset{n \rightarrow \infty}{\lim}E_{\tilde{P}_{t}^{n}}[e^{\gamma \omega(t)}] = E_{\tilde{P}_{t}}[e^{\gamma \omega(t)}]
\end{align*}
Consequently,
\begin{align*}
\underset{n \rightarrow \infty}{\lim}E_{P_{n}}[Z_{t}^{\mathcal{G}}] = E_{P}[Z_{t}^{\mathcal{G}}]
\end{align*}
\end{lemma}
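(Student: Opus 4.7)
The plan is to reduce both assertions to the convergence of exponential moments of the scaled random walk under $P_n$, and then combine Donsker's theorem with a uniform integrability estimate that exploits the bounded support of $\xi$. For the first assertion, observe that $e^{\gamma\omega(t)}$ is $\mathcal{F}_t$-measurable, and the remark following the definition of the martingale preserving measure shows that $\tilde{P}_t^n$ and $P_n$ agree on $\mathcal{F}_t$, and similarly that $\tilde{P}_t$ agrees with $P$ on $\mathcal{F}_t$ (since $E_{P}[1/\eta_t^{Y}\,|\,\mathcal{F}_t] = 1$). Thus the first claim is equivalent to
\begin{equation*}
\lim_{n\to\infty} E_{P_n}[e^{\gamma \omega(t)}] = E_P[e^{\gamma\omega(t)}].
\end{equation*}

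By Donsker's theorem $P_n \to P$ weakly on $C[0,1]$, and the functional $\omega \mapsto e^{\gamma\omega(t)}$ is continuous in the sup-norm. Because the integrand is unbounded the Portmanteau theorem does not apply directly; I would invoke the standard upgrade that weak convergence together with uniform integrability implies convergence of expectations. Establishing uniform integrability is where the bounded-support hypothesis is used. If $|\xi| \leq K$ almost surely, then for any real $\beta$ and any $k \leq n$,
\begin{equation*}
E_{P_n}\!\left[e^{\beta \omega(k/n)}\right] = \left( E[e^{\beta \xi/\sqrt{n}}] \right)^k,
\end{equation*}
and a Taylor expansion with remainder controlled by $|\xi|\leq K$ yields $E[e^{\beta \xi/\sqrt{n}}] = 1 + \beta^2/(2n) + O(n^{-3/2})$ uniformly in $n$, whence $\sup_n E_{P_n}[e^{\beta \omega(k/n)}] \leq e^{\beta^2 t/2}(1 + O(n^{-1/2}))$. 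Since the linear interpolation satisfies $|\omega(t) - \omega(\lfloor nt\rfloor/n)| \leq K/\sqrt{n}$, the same uniform bound extends to $E_{P_n}[e^{\beta \omega(t)}]$. Applying this with $\beta = \gamma(1+\delta)$ for some small $\delta > 0$ yields $L^{1+\delta}$-boundedness, hence uniform integrability, of the family $\{e^{\gamma \omega(t)}\}$ under $\{P_n\}$, and the first claim follows.

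For the consequential statement, the plan is to run the same recipe on the continuous functional $Z_t^{\mathcal{G}} = e^{-\omega(t)/2 - 1/8}/\eta_t^Y$, using the explicit formulas for $Z_t^{\mathcal{F}}$ and $\eta_t^y$ recorded in Step 1 of the proof of the asymptotic replication proposition. Donsker again gives weak convergence of $Z_t^{\mathcal{G}}$ under $P_n$ to its value under $P$, so only the uniform $L^{1+\delta}$-bound needs to be established. This is the main obstacle, because $1/\eta_t^Y$ contains a factor of the form $\exp((Y - \omega(t))^2/(2(1-t)))$, and bounding exponential moments of $Z_t^{\mathcal{G}}$ therefore requires a joint moment generating function estimate for $(\omega(t), Y)$. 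Such an estimate follows from the same Taylor-expansion argument applied to the bounded increments of the random walk, since $Y$ is a continuous functional of a uniformly sub-Gaussian path; once this joint bound is in hand the uniform integrability argument goes through, and the identification $E_P[Z_t^{\mathcal{G}}] = 1$ together with weak convergence gives the limit.
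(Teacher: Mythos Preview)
Your argument for the first assertion is correct and is essentially the same as the paper's: both reduce the expectation under $\tilde P_t^n$ to an expectation under $P_n$ by using that the martingale preserving measure leaves the law of $\mathcal F_t$-measurable functionals unchanged, and then pass to the limit via the CLT/Donsker. The only real difference is rigor: the paper simply asserts that convergence in distribution of $\omega(t)$ to a $N(0,t)$ variable implies convergence of the moment generating functions, whereas you make this step explicit by proving uniform $L^{1+\delta}$-boundedness of $e^{\gamma\omega(t)}$ through the Taylor expansion of $E[e^{\beta\xi/\sqrt n}]$ (which is exactly where the bounded-support hypothesis enters). So on this part you are doing the same thing, only more carefully.

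For the ``Consequently'' clause the comparison is less clean. The paper's proof does not actually argue this part separately; it is treated as an immediate by-product of the first statement applied with $\gamma=-1/2$ (since $Z_t^{\mathcal F}=e^{-\omega(t)/2-1/8}$) together with the measure-preserving property. Your plan instead tries to control $Z_t^{\mathcal G}=Z_t^{\mathcal F}/\eta_t^Y$ directly by a joint exponential-moment bound on $(\omega(t),Y)$. That is a reasonable route, but note that the uniform $L^{1+\delta}$ estimate you need for $1/\eta_t^Y$ depends on the specific form of $Y$ and is not available at the level of generality of the lemma; in the paper this difficulty is sidestepped later (in the proof of the main theorem) by introducing the stopping times $T_k$ that force $1/\eta_{t\wedge T_k}^Y\le k$. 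So your second-part argument is more ambitious than what the paper actually proves here, and the step ``$Y$ is a continuous functional of a uniformly sub-Gaussian path, hence the joint bound follows'' would need to be made precise for a specific $Y$ before it can stand on its own.
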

\begin{proof}
Since $\mathcal{F} \perp \sigma(Y)$, under $\tilde{P}_{t}^{n}$, $\omega(t) = \frac{1}{\sqrt{n}} \sum_{k=1}^{\lfloor nt \rfloor} \xi_{k} + \frac{nt - \lfloor nt \rfloor}{\sqrt{n}}\xi_{\lfloor nt \rfloor + 1}$ where $\{\xi_{k}\}$ is an i.i.d. sequence of random variables with the law of $\xi$.  Since
\begin{align*}
\left| \frac{1}{\sqrt{n}}\sum_{k=1}^{\lfloor nt \rfloor} \xi_{k} - \frac{\sqrt{t}}{\sqrt{\lfloor nt \rfloor}} \sum_{k=1}^{\lfloor nt \rfloor} \xi_{k} \right| \rightarrow 0 \text{ in probability,}
\end{align*}
and, by the central limit theorem, $\frac{\sqrt{t}}{\sqrt{nt}} \sum_{k=1}^{\lfloor nt \rfloor} \xi_{k}$ converges in distribution to a normal random variable with mean zero and variance t, we see the distribution of $\omega(t)$ under $\tilde{P}_{t}^{n}$ converges to the distribution of $\omega(t)$ under $\tilde{P}_{t}$.  It follows that the moment generating functions converge as well.
\end{proof}
We now come to the main result.
\begin{theorem}
Recall that under $\tilde{P}_{t}$, $\sigma(Y) \perp \mathcal{F}$.  Also assume,
\begin{enumerate}
\item The utility function $U$ is strictly increasing, strictly concave, continuously differentiable, satisfies the Inada conditions, and that $AE(U) < 1$.
\item $E_{Z_{t}^{\mathcal{G}}dP}[M_{t} | \mathcal{G}_{u}]$ = $E_{Z_{t}^{\mathcal{G}}dP}[f(S_{t}, t) | \mathcal{G}_{u}]$ is $C^{2}$
\item There exists a constant $C > 0$ such that 
	\begin{align*}
		\frac{1}{C} \leq \frac{\eta_{t}^{y,n}}{\eta_{t}^{y}} \leq C \text{ a.s.}
	\end{align*}
\item If $T_{k} = \inf\{s \geq 0 : \eta_{s}^{Y} \geq 1/k, \eta_{s}^{Y_{n}} \geq 1/k \text{ for all n}\}$, and either 
	\begin{align*}
		\underset{n \rightarrow \infty}{\lim} E_{P_{n}}[V(yZ_{t \wedge T_{k}}^{\mathcal{G}^{n}})] \text{ exists uniformly in k }
	\end{align*}
	or
	\begin{align*}
		\underset{k \rightarrow \infty}{\lim} E_{P_{n}}[V(yZ_{t \wedge T_{k}}^{\mathcal{G}^{n}})] \text{ exists uniformly in n }
	\end{align*}
\end{enumerate}
Then, for all $x > 0$, the value function $x \rightarrow u^{\mathcal{G}}(x)$ is finite-valued and
\begin{align*}
\underset{n \rightarrow \infty}{\lim}u_{n}^{\mathcal{G}^{n}}(x) = u^{\mathcal{G}}(x)
\end{align*}
\end{theorem}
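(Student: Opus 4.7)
The plan is to combine the easy half (the $\liminf$ inequality from Proposition 3.9) with convex duality to obtain the $\limsup$ inequality. Proposition 3.9 already delivers $\liminf_{n\to\infty} u_n^{\mathcal{G}^n}(x) \geq u^{\mathcal{G}}(x)$, so the work is to show $\limsup_{n\to\infty} u_n^{\mathcal{G}^n}(x) \leq u^{\mathcal{G}}(x)$. Because of the inequality $u_n^{\mathcal{G}^n}(x) \leq u_n^{Z_t^{\mathcal{G}^n}}(x)$ noted in the analogous-problem subsection, it suffices to prove $\limsup_{n\to\infty} u_n^{Z_t^{\mathcal{G}^n}}(x) \leq u^{\mathcal{G}}(x)$, i.e.\ to compare the two \emph{complete-market} surrogates.

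For this I would invoke the Kramkov-Schachermayer conjugacy. Setting $\tilde{U}(y) = \sup_{x>0}(U(x)-xy)$ and defining $v_n(y) := E_{P_n}[\tilde{U}(yZ_t^{\mathcal{G}^n})]$ and $v(y) := E_P[\tilde{U}(yZ_t^{\mathcal{G}})]$, the complete-market problems satisfy $u_n^{Z_t^{\mathcal{G}^n}}(x) = \inf_{y>0}(v_n(y)+xy)$ and $u^{\mathcal{G}}(x) = \inf_{y>0}(v(y)+xy)$, the latter being finite-valued by Proposition 3.12 together with the hypothesis $AE(U)<1$. The desired upper bound then follows from the elementary exchange $\limsup_n \inf_{y}(v_n(y)+xy) \leq \inf_{y}\bigl(\limsup_n v_n(y) + xy\bigr)$ provided one can show $\limsup_n v_n(y) \leq v(y)$ for each $y>0$.

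The convergence $v_n(y) \to v(y)$ is where the four hypotheses earn their keep. First, the weak convergence $Z_t^{\mathcal{G}^n} dP_n \Rightarrow Z_t^{\mathcal{G}} dP$ (established in the analogous-problem subsection, with Lemma 3.13 providing the needed moment convergence) gives the weak-limit candidate. Second, $AE(U)<1$ provides, via Kramkov-Schachermayer, the polynomial growth bound $\tilde{U}(y) \leq K(1+y^{-p})$ for some $p<1$ near zero and polynomial control near infinity, which together with Lemma 3.13 applied to moments of the log-normal $Z_t^{\mathcal{F}^n}$ forces the unconditional integrals to behave well. Third, Assumption 3 of the theorem, $C^{-1} \leq \eta_t^{y,n}/\eta_t^y \leq C$, lets me replace $Z_t^{\mathcal{G}^n} = Z_t^{\mathcal{F}^n}/\eta_t^{Y_n}$ by its continuous analog up to a bounded multiplicative factor and so compare $\tilde{U}$-evaluations pointwise. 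Fourth, Assumption 4 introduces the stopping times $T_k$ at which both $\eta_s^Y$ and $\eta_s^{Y_n}$ stay bounded away from zero: on $\{t \leq T_k\}$ the densities are bounded, so $\tilde{U}(yZ_{t\wedge T_k}^{\mathcal{G}^n}) \to \tilde{U}(yZ_{t\wedge T_k}^{\mathcal{G}})$ in expectation by bounded convergence combined with weak convergence of the underlying measures, and the uniformity-in-$k$-or-$n$ hypothesis then permits passage to the limit $k\to\infty$ by interchanging the two limits.

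The main obstacle is uniform integrability of $\tilde{U}(yZ_t^{\mathcal{G}^n})$ in $n$. The difficulty is genuine: the conditional density $\eta_t^{Y_n}$ can be arbitrarily close to zero on rare events, and $\tilde{U}$ grows fastest precisely where its argument is small, so tails of $\tilde{U}(yZ_t^{\mathcal{G}^n}) = \tilde{U}(yZ_t^{\mathcal{F}^n}/\eta_t^{Y_n})$ are delicate. The localization in Assumption 4 is designed precisely to sidestep this: one proves $\limsup_n v_n(y) \leq v(y)$ by first establishing the statement with $Z_t^{\mathcal{G}^n}$ replaced by $Z_{t\wedge T_k}^{\mathcal{G}^n}$ (where bounded convergence applies cleanly thanks to Assumption 3), and then removing the localization by the uniform-existence-of-limit clause in Assumption 4. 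Once this is done, combining the chain $\limsup_n u_n^{\mathcal{G}^n}(x) \leq \limsup_n u_n^{Z_t^{\mathcal{G}^n}}(x) \leq u^{\mathcal{G}}(x)$ with Proposition 3.9 completes the proof and simultaneously shows $u^{\mathcal{G}}(x)<\infty$.
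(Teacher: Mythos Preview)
Your proposal is correct and follows essentially the same architecture as the paper: Proposition 3.9 for the $\liminf$, the surrogate bound $u_n^{\mathcal{G}^n}\le u_n^{Z_t^{\mathcal{G}^n}}$, convex duality reducing the $\limsup$ to $v_n(y)\to v(y)$, localization by the stopping times $T_k$, Assumption 3 to compare $Z^{\mathcal{G}^n}$ with $Z^{\mathcal{G}}$, and Assumption 4 to interchange the $n$- and $k$-limits. The only point where your sketch is too quick is the phrase ``bounded convergence applies cleanly'' for the localized step: even with $\eta\ge 1/k$ the random variable $\tilde U(yZ_{t\wedge T_k}^{\mathcal{G}})$ is unbounded in both directions because $Z^{\mathcal F}$ is, so the paper instead carries out an explicit two-sided tail truncation using the Kramkov--Schachermayer bound $V(y)\le Ly^{-\alpha}$ together with the exponential-moment Lemma 3.11---exactly the ingredients you already identified.
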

\begin{proof}
Define a sequence of stopping times $T_{k} = \inf\{s \geq 0 : \eta_{s}^{Y} \geq 1/k, \eta_{s}^{Y_{n}} \geq 1/k \text{ for all n}\}$ and note that because $\eta_{t}^{Y} > 0$ a.s., we see $\eta_{t \wedge T_{k}}^{Y} \rightarrow \eta_{t}^{Y}$ a.s. as $k \rightarrow \infty$.  We proceed as in \cite{MR4154768} in the following steps, replacing $\eta_{t}^{Y}$ with $\eta_{t \wedge T_{k}}^{Y}$, $Z_{t \wedge T_{k}}^{\mathcal{G}}$ with $Z_{t \wedge T_{k}}^{\mathcal{G}}$ in all our definitions, and letting $u_{k}^{\mathcal{G}}$, $v_{k}^{\mathcal{G}}$ denote the new primal utility functions after these replacements.  Note we are now looking at $u_{n}^{Z_{t \wedge T_{k}}^{\mathcal{G}}}(x)$ and we have the bound
\begin{align*}
\frac{1}{\eta_{t \wedge T_{k}}^{y}} \leq k \text{ a.s. }
\end{align*}
\textbf{Step 1} We claim that if $AE(U) < 1$, then $u_{k}^{\mathcal{G}}(x) < \infty$ for all $x > 0$. \\
Corollary 6.1 of \cite{MR1722287} establishes the bound
\begin{align*}
\text{There exist } L > 0 \text{ and } \alpha > 0 \text{ such that } V(y) \leq Ly^{-\alpha} \text{, for all } y \in (0, \infty)
\end{align*}
as a consequence of $AE(U) < 1$, but only for $0 < y \leq y_{0}$, for some $y_{0}$.  If we have that $V(\infty) = U(0) < 0$, we get the bound for all $y > 0$.  Without loss of generality, we may shift $U$ by a constant, so if $U(0) \geq 0$, we replace $U$ with $U(x) - U(0) - c$, for a suitable constant $c > 0$.  Then we have the above inequality for all $y \in (0, \infty)$.  \\
From this, we have the estimate
\begin{align*}
v_{k}^{\mathcal{G}}(y) &= E_{P}\left[V\left(y Z_{t \wedge T_{k}}^{\mathcal{G}}\right)\right] \\
&\leq k^{-\alpha}L E_{P}\left[y^{-\alpha}\exp\left(-\frac{\omega(t \wedge T_{k})}{2} - \frac{1}{8}\right)^{-\alpha}\right] \\
&= k^{-\alpha}L y^{-\alpha}e^{\alpha/8}E_{P}\left[exp\left(\frac{\alpha}{2}\omega(t \wedge T_{k})\right)\right] < \infty
\end{align*}
where we have used the fact that the fact that the latter expectation is just an exponential moment of a Gaussian random variable.  Thus, by Theorem 2.0 of \cite{MR1722287}, the dual value function $y \rightarrow v_{k}^{\mathcal{G}}(y)$ as well as the primal value function $x \rightarrow u_{k}^{\mathcal{G}}(x)$ have finite values. We then deduce from Proposition 3.10 that $AE(u_{k}^{\mathcal{G}}) \leq AE(U) < 1$. \\
\textbf{Step 2}  Using the notation of section 3.3, define $u_{\infty}^{Z_{t \wedge T_{k}}^{\mathcal{G}}}(x)$ for $x > 0$ by
\begin{align*}
u_{\infty}^{Z_{t \wedge T_{k}}^{\mathcal{G}}}(x) := \lim \underset{n \rightarrow \infty}{\sup} u_{n}^{Z_{t \wedge T_{k}}^{\mathcal{G}}}(x) \\
\end{align*}
Then, we claim, $u_{\infty}^{Z_{t \wedge T_{k}}^{\mathcal{G}}}(x) < \infty$ for all $x > 0$. \\
Define for each $n$ the conjugate function $v_{n}^{Z_{t \wedge T_{k}}^{\mathcal{G}}}$, which is
\begin{align*}
v_{n}^{Z_{t \wedge T_{k}}^{\mathcal{G}}}(y) = E_{P_{n}}\left[V(yZ_{t \wedge T_{k}}^{\mathcal{G}})\right]
\end{align*}
By the same argument as in Step 1, we have
\begin{align*}
v_{n}^{Z_{t \wedge T_{k}}^{\mathcal{G}}}(y) &= E_{P_{n}}[V(yZ_{t \wedge T_{k}}^{\mathcal{G}})] \\
& \leq k^{-\alpha}L y^{-\alpha}e^{\alpha / 8} E_{P}\left[\exp\left(\frac{\alpha}{2} \omega(t \wedge T_{k}) \right) \right] \\
& < \infty
\end{align*}
By Lemma 3.11.  This implies that $v_{n}^{Z_{t \wedge T_{k}}^{\mathcal{G}}}(y)$ is uniformly bounded in n for fixed y, which, by standard arguments concerning conjugate functions, proves that $u_{\infty}^{Z_{t \wedge T_{k}}^{\mathcal{G}}}(x) < \infty$ for each $x$. \\
\textbf{Step 3} In fact, $\underset{n \rightarrow \infty}{\lim}u_{n}^{Z_{t \wedge T_{k}}^{\mathcal{G}}}(x)$ exists and equals $u_{k}^{\mathcal{G}}(x)$ for all $x > 0$. \\
Using stadard arguments concerning conjugate functions, we only need to prove $\underset{n \rightarrow \infty}{\lim}v_{n}^{Z_{t \wedge T_{k}}^{\mathcal{G}}}(y) = v_{k}^{\mathcal{G}}(y)$ here. Recall 
\begin{align*}
v_{n}^{Z_{t \wedge T_{k}}^{\mathcal{G}}}(y) = E_{P_{n}}[V(yZ_{t \wedge T_{k}}^{\mathcal{G}})] \text{ and } v_{k}^{\mathcal{G}}(y) = E_{P}[V(yZ_{t \wedge T_{k}}^{\mathcal{G}})]
\end{align*}
If V were bounded (as V is already continuous), then the conclusion would follow from the fact that $P_{n} \overset{d}{\rightarrow} P$.  Since V is typically not bounded, we must show that the contributions from the tails can be uniformly controlled in some sense.  The strategy is to show the two uniform bounds below.
\begin{align*}
\text{For each } y > 0 \text{ and } \epsilon > 0 \text{, there exists } M > 0 \text{ such that } \\
E_{P_{n}}\left[\left|V(yZ_{t \wedge T_{k}}^{\mathcal{G}})\right|1_{[V(yZ_{t \wedge T_{k}}^{\mathcal{G}})] < -M}\right] < \epsilon \text{, uniformly in n} \\
\text{For each } y > 0 \text{ and } \epsilon > 0 \text{, there exists } M > 0 \text{ such that } \\
E_{P_{n}}\left[\left|V(yZ_{t \wedge T_{k}}^{\mathcal{G}})\right|1_{[V(yZ_{t \wedge T_{k}}^{\mathcal{G}})] > M}\right] < \epsilon \text{, uniformly in n}
\end{align*}
We begin with the first inequality.  If $U(0)$ is finite, then $V(y) \geq V(\infty) = U(0)$ for all $y$, so taking $M = -U(0)$ we obtain the result immediately.  Now suppose $U(0) = -\infty$.  In this case, the Inada conditions imply $\underset{y \rightarrow \infty}{\lim}V'(y) = - \underset{y \rightarrow \infty}{\lim}(U')^{-1}(y) = 0$.  As V is convex, we see that for $M$ large, $|V(y)| \leq \epsilon y$, provided $V(y) \leq -M$.  But then 
\begin{align*}
E_{P_{n}}\left[\left|V(yZ_{t \wedge T_{k}}^{\mathcal{G}})\right|1_{[V(yZ_{t \wedge T_{k}}^{\mathcal{G}}) \leq -M}\right] & \leq E_{P_{n}}[\epsilon Z_{t \wedge T_{k}}^{\mathcal{G}}] \\
&\leq \epsilon E_{P_{n}}[Z_{t \wedge T_{k}}^{\mathcal{G}}] \\
&\leq \epsilon E_{\tilde{P}_{t}^{n}}[Z_{t}^{\mathcal{F}}] \\
& \rightarrow \epsilon \cdot 1 = \epsilon
\end{align*}
where we used Lemma 3.11 in the last step.  This shows the first inequality.  We now show the second inequality.  For the bounds $L > 0$ and $\alpha > 0$ such that $V(y) \leq Ly^{-\alpha}$ for all $y \in (0, \infty)$, choose B large enough so
\begin{align*}
E_{\tilde{P}_{t}^{n}}\left[e^{\alpha \omega(t)/2} 1_{[\omega(t) \geq B]} \right] \leq \frac{\epsilon}{k^{-\alpha}Ly^{-\alpha}e^{\alpha/8}} \text{ for all n.}
\end{align*}
The existence of such a B follows from Lemma 3.11 and, because $\tilde{P}_{t}^{n} \overset{d}{\rightarrow} \tilde{P}_{t}$,
\begin{enumerate}
\item $E_{\tilde{P}_{t}^{n}}[\min\{e^{\alpha \omega(t)}, B \}] \rightarrow E_{\tilde{P}_{t}}[\min \{e^{\alpha \omega(t)}, B\}]$ and
\item $\tilde{P}_{t}^{n}[\omega(t) \geq B] \rightarrow \tilde{P}_{t}[\omega(t) \geq B]$, for all $B > 0$
\end{enumerate}
And let
\begin{align*}
M = k^{-\alpha}Ly^{-\alpha}\exp \left( \frac{\alpha B}{2} + \frac{\alpha}{8}\right)
\end{align*}
We have $V(y) \leq Ly^{-\alpha}$ for all $y > 0$, and so
\begin{align*}
[V(yZ_{t \wedge T_{k}}^{\mathcal{G}}) \geq M] &\subseteq [L(yZ_{t \wedge T_{k}}^{\mathcal{G}})^{-\alpha} \geq M] \subseteq [k^{-\alpha}L(yZ_{t}^{\mathcal{F}})^{-\alpha} \geq k^{-\alpha}Ly^{-\alpha}e^{\alpha B/2 + \alpha /8}] \\
&= [(Z_{t}^{\mathcal{F}})^{-\alpha} \geq e^{\alpha B/2 + \alpha / 8}] =[(e^{-\omega(t)/2 - 1/8})^{-\alpha} \geq e^{\alpha B/2 + \alpha / 8}] \\
&= [e^{\omega(t)/2} \geq e^{\alpha B/2}] = [\omega(t) \geq B]
\end{align*}
Hence,
\begin{align*}
E_{P_{n}}[V(yZ_{t \wedge T_{k}}^{\mathcal{G}}) 1_{[V(yZ_{t \wedge T_{k}}^{\mathcal{G}}) \geq M]}] &\leq E_{P_{n}}[L(yZ_{t \wedge T_{k}}^{\mathcal{G}})^{-\alpha} 1_{[\omega(t) \geq B]}] \\
&= E_{P_{n}}[k^{-\alpha}Ly^{-\alpha}(e^{-\omega(t)/2 - 1/8})^{-\alpha} 1_{[\omega(t) \geq B]}] \\
&= k^{-\alpha}Ly^{-\alpha}e^{\alpha / 8} E_{P_{n}}[e^{\omega(t)/2} 1_{[\omega(t) \geq B]}] \\
&\leq \epsilon
\end{align*}
Having shown the two desired inequalities, we finish with a standard argument.  Fix $y, \epsilon > 0$, and find $M$ such that the two desired inequalities hold.  Let $V^{M}(yZ_{t \wedge T_{k}}^{\mathcal{G}}) := \max\{-M, \min\{M, V(yZ_{t \wedge T_{k}}^{\mathcal{G}})\}\}$; that is, $V^{M}(yZ_{t \wedge T_{k}}^{\mathcal{G}})$ is $V(yZ_{t \wedge T_{k}}^{\mathcal{G}})$ truncated on $[-M, M]$.  The truncation is bounded and continuous, so $P_{n} \overset{d}{\rightarrow} P$ implies $E_{P_{n}}[V^{M}] \rightarrow E_{P}[V^{M}]$.  And the difference $E_{P_{n}}[V(yZ_{t \wedge T_{k}}^{\mathcal{G}}) - V^{M}(yZ_{t \wedge T_{k}}^{\mathcal{G}})]$ are uniformly bounded by $2 \epsilon$.  Hence, $v_{k, n}^{\mathcal{G}}(y) = E_{P_{n}}[V(yZ_{t \wedge T_{k}}^{\mathcal{G}})] \rightarrow E_{P}[V(yZ_{t \wedge T_{k}}^{\mathcal{G}})] = v_{k}^{\mathcal{G}}(y)$ for all y, which implies that $u_{n}^{Z_{t \wedge T_{k}}^{\mathcal{G}}}(x) \rightarrow u_{k}^{\mathcal{G}}(x)$ for all $x > 0$. \\
\textbf{Step 4} For every $y > 0$ and $\epsilon > 0$, there exists $M > 0$ such that
\begin{align*}
E_{P_{n}}\left[\left|V(yZ_{t \wedge T_{k}}^{\mathcal{G}^{n}})\right|1_{[V(yZ_{t \wedge T_{k}}^{\mathcal{G}^{n}}) < -M]} \right] < \epsilon \text{, uniformly in n.}
\end{align*}
We may use the same proof as in Step 3.  Indeed, we want to uniformly control the right-hand tail of $v_{n}^{Z_{t \wedge T_{k}}^{\mathcal{G}^{n}}}(y) = E_{P_{n}}[V(yZ_{t \wedge T_{k}}^{\mathcal{G}^{n}})]$.  The key is that the inequality
\begin{align*}
E_{\tilde{P}_{t}^{n}}\left[e^{\alpha \omega(t)/2} 1_{[\omega(t) \geq B]} \right] \leq \frac{\epsilon}{k^{-\alpha}Ly^{-\alpha}e^{\alpha/8}} \text{ for all n.}
\end{align*}
still holds by the same argument, and we even have $E_{P_{n}}[Z_{t}^{\mathcal{G}^{n}}] = 1$ (as opposed to $E_{P_{n}}[Z_{t}^{\mathcal{G}}] \rightarrow 1$ as before). \\
\textbf{Step 5}  There is a constant $C > 1$ such that, for all $n$,
\begin{align*}
\frac{1}{C} \leq \frac{Z_{t}^{\mathcal{G}^{n}}(\omega)}{Z_{t}^{\mathcal{G}}(\omega)} \leq C \text{, } \tilde{P}_{t}^{n}-a.s.
\end{align*}
We have
\begin{align*}
\frac{Z_{t}^{\mathcal{F}^{n}}(\omega)}{Z_{t}^{\mathcal{F}}(\omega)} = \frac{e^{-a_{n}\omega(t) - b_{n}}}{e^{-\omega(t)/2 - 1/8}},
\end{align*}
where $a_{n} = 1/2 + d/\sqrt{n} + o(1/\sqrt{n})$, for $d = E[\xi^{3}]/24$, and $b_{n} = 1/8 + o(1)$.  Then,
\begin{align*}
\frac{Z_{t}^{\mathcal{F}^{n}}(\omega)}{Z_{t}^{\mathcal{F}}(\omega)} = \exp \left[\frac{d\omega(t)}{\sqrt{n}} + \omega(t) \cdot o\left(\frac{1}{\sqrt{n}}\right)+ \omega(t)\right]
\end{align*}
Since we assume $\xi$ has bounded support, there exists a constant K such that $|\xi| \leq K$ with probability 1, so $|\omega(t)| \leq K\sqrt{n}$, $\tilde{P}_{t}^{n}$-a.s..  Combining this with the assumption that there exists a constant $C > 0$ such that
\begin{align*}
\frac{1}{C} \leq \frac{\eta_{t}^{y,n}}{\eta_{t}^{y}} \leq C \text{ a.s.}
\end{align*}
By manipulating the constant $C$, it is now evident that these inequalities allow us to choose an appropriate $C > 1$ such that the desired inequality holds. \\
\textbf{Step 6} For every $y > 0$ and $\epsilon > 0$, there exists $M > 0$ such that
\begin{align*}
E_{P_{n}}[V(yZ_{t \wedge T_{k}}^{\mathcal{G}^{n}})1_{[V(yZ_{t \wedge T_{k}}^{\mathcal{G}^{n}}) > M]}] < \epsilon \text{, uniformly in n}
\end{align*}
Using the inequality from Step 5, we see $Z_{t \wedge T_{k}}^{\mathcal{G}}(\omega)/C \leq Z_{t \wedge T_{k}}^{\mathcal{G}^{n}}(\omega)$ on the support of $\tilde{P}_{t}^{n}$.  As V is a decreasing function, this implies that, for all $y > 0$,
\begin{align*}
V\left(y\frac{Z_{t \wedge T_{k}}^{\mathcal{G}}}{C}(\omega)\right) \geq V(yZ_{t \wedge T_{k}}^{\mathcal{G}^{n}}(\omega))
\end{align*}
and, therefore,
\begin{align*}
\left[V\left(\frac{yZ_{t \wedge T_{k}}^{\mathcal{G}}(\omega)}{C}\right) > M\right] \supseteq \left[V(yZ_{t \wedge T_{k}}^{\mathcal{G}^{n}}(\omega)) > M\right]
\end{align*}
both restricted to the support of $\tilde{P}_{t}^{n}$.  Thus, for any $M > 0$,
\begin{align*}
E_{P_{n}}[V(yZ_{t \wedge T_{k}}^{\mathcal{G}^{n}})1_{[V(yZ_{t \wedge T_{k}}^{\mathcal{G}^{n}}) > M]}] \leq E_{P_{n}}\left[V\left(y\frac{Z_{t \wedge T_{k}}^{\mathcal{G}}}{C}1_{[V(yZ_{t \wedge T_{k}}^{\mathcal{G}}) > M]}\right) \right]
\end{align*}
Applying the corresponding inequality from Step 3, for $y' = y/C$, we complete this step. \\
\textbf{Step 7} For all $y > 0$
\begin{align*}
\underset{n \rightarrow \infty}{\lim} \left|E_{P_{n}}[V(yZ_{t \wedge T_{k}}^{\mathcal{G}^{n}})] - E_{P_{n}}[V(yZ_{t \wedge T_{k}}^{\mathcal{G}})]\right| = 0
\end{align*}
The argument will change depending on when $V(0) = U(\infty)$ and/or $V(\infty) = U(0)$ are finite-valued.  We present first the argument when $V(0) = U(\infty) = \infty$ and $V(\infty) = U(0) = -\infty$, then sketch how to handle the easier cases where one or the other is finite. \\
Let $\epsilon > 0$ and $y > 0$, and $M > 0$ so our tail inequalities hold.  Then outside of these sets we have $V(yZ_{t \wedge T_{k}}^{\mathcal{G}})(\omega) \in [-M, M]$, and $V(yZ_{t}^{\mathcal{G}^{n}})(\omega) \in [-M, M]$ uniformly in $n$.    Then we see
\begin{align*}
\left|E_{P_{n}}[V(yZ_{t \wedge T_{k}}^{\mathcal{G}^{n}})] - E_{P_{n}}[V(yZ_{t \wedge T_{k}}^{\mathcal{G}})]\right| &\leq E_{P_{n}}\left[\left|V(yZ_{t \wedge T_{k}}^{\mathcal{G}^{n}})\right|1_{[V(yZ_{t \wedge T_{k}}^{\mathcal{G}^{n}}) < -M]}\right] \\
&+ E_{P_{n}}\left[\left|V(yZ_{t \wedge T_{k}}^{\mathcal{G}^{n}})\right|1_{[V(yZ_{t \wedge T_{k}}^{\mathcal{G}^{n}}) > M]}\right] \\
&+ E_{P_{n}}\left[\left|V(yZ_{t \wedge T_{k}}^{\mathcal{G}})\right|1_{[V(yZ_{t \wedge T_{k}}^{\mathcal{G}}) < -M]}\right] \\
&+ E_{P_{n}}\left[\left|V(yZ_{t \wedge T_{k}}^{\mathcal{G}})\right|1_{[V(yZ_{t \wedge T_{k}}^{\mathcal{G}}) > M]}\right] \\
&+ E_{P_{n}}[\left|V^{M}(yZ_{t \wedge T_{k}}^{\mathcal{G}^{n}})] - V^{M}(yZ_{t \wedge T_{k}}^{\mathcal{G}})\right|]
\end{align*}
By dominated convergence and our tail inequalities we see
\begin{align*}
\lim \underset{n \rightarrow \infty}{\sup} \left|E_{P_{n}}[V(yZ_{t \wedge T_{k}}^{\mathcal{G}^{n}})] - E_{P_{n}}[V(yZ_{t \wedge T_{k}}^{\mathcal{G}})]\right| \leq 4 \epsilon
\end{align*}
Letting $\epsilon \downarrow 0$, we achieve the result.  When $V(0)$ and/or $V(\infty)$ are finite, the argument needs a modification.  Suppose $V(0) < \infty$.  This is relevant when $yZ_{t \wedge T_{k}}^{\mathcal{G}}$ and $yZ_{t \wedge T_{k}}^{\mathcal{G}^{n}}$ are both close to zero, which is for paths $\omega$ where $\omega(t)$ is large.  For those paths, $Z_{t \wedge T_{k}}^{\mathcal{G}^{n}}$ can be quite far from $Z_{t \wedge T_{k}}^{\mathcal{G}}$.  However, even if these terms are far apart, $V(yZ_{t \wedge T_{k}}^{\mathcal{G}})$ and $V(yZ_{t \wedge T_{k}}^{\mathcal{G}^{n}})$ will be close together, as each is close to the finite $V(0)$.  A similar argument works for cases where $V(\infty)$ is finite. \\
\textbf{Step 8}  Combine Steps 7 and 3 to obtain $\underset{n \rightarrow \infty}{\lim} u_{n}^{Z_{t \wedge T_{k}}^{\mathcal{G}^{n}}}(x) = u_{k}^{\mathcal{G}}(x)$ for all $x > 0$. \\
Step 3 shows that $v_{n}^{Z_{t \wedge T_{k}}^{\mathcal{G}}}(y) \rightarrow v_{k}^{\mathcal{G}}(y)$ for all $y > 0$ (which is how we concluded that $u_{n}^{Z_{t \wedge T_{k}}^{\mathcal{G}}}(x) \rightarrow u_{k}^{\mathcal{G}}$.  Step 7 then implies that $v_{n}^{Z_{t \wedge T_{k}}^{\mathcal{G}^{n}}}(y) \rightarrow v_{k}^{\mathcal{G}}(y)$ for all $y > 0$. This, in turn, implies $\underset{n \rightarrow \infty}{\lim}u_{n}^{Z_{t \wedge T_{k}}^{\mathcal{G}^{n}}}(x) \rightarrow u_{k}^{\mathcal{G}}(x)$ by standard arguments on conjugate functions.  \\
\textbf{Step 9} Combine Step 8 and Proposition 3.9 to finish the proof in the case where we introduce the stopping times $\{T_{k}\}_{k = 0}^{\infty}$. \\
\textbf{Step 10} Let
\begin{align*}
a_{nk} = E_{P_{n}}[V(yZ_{t \wedge T_{k}}^{\mathcal{G}^{n}})] \\
a_{n} = E_{P_{n}}[V(yZ_{t}^{\mathcal{G}^{n}})] \\
a_{k} = E_{P}[V(yZ_{t \wedge T_{k}}^{\mathcal{G}})] \\
a = E_{P}[V(yZ_{t \wedge T_{k}}^{\mathcal{G}})]
\end{align*}
Note from our previous results and definitions that
\begin{align*}
\underset{n \rightarrow \infty}{\lim} a_{nk} = a_{k} \text{ for each k by Steps 1-7} \\
\underset{k \rightarrow \infty}{\lim} a_{nk} = a_{n} \text{ for each n by definition of the stopping times} \\
\underset{k \rightarrow \infty}{\lim} a_{k} = a \text{ by Dominated Convergence Theorem}
\end{align*}
As convergence is uniform in one variable by assumption 5., we may interchange the following double limit, giving us (for example)
\begin{align*}
\underset{n \rightarrow \infty}{\lim}E_{P_{n}}[V(yZ_{t}^{\mathcal{G}^{n}})] &= \underset{n \rightarrow \infty}{\lim} a_{n} \\
&= \underset{n \rightarrow \infty}{\lim} \underset{k \rightarrow \infty}{\lim}a_{nk} \\
&= \underset{k \rightarrow \infty}{\lim} \underset{n \rightarrow \infty}{\lim}a_{nk} \\
&= \underset{k \rightarrow \infty}{\lim} a_{k} \\
&= a \\
&= E_{P}[V(yZ_{t}^{\mathcal{G}})]
\end{align*} \\
\textbf{Step 11}  From our last step we see $v_{n}^{Z_{t}^{\mathcal{G}^{n}}}(y) \rightarrow v^{\mathcal{G}}(y)$ as $n \rightarrow \infty$ for all $y > 0$.  Using standard arguments of conjugate functions we see $u_{n}^{Z_{t}^{\mathcal{G}^{n}}}(x) \rightarrow u^{\mathcal{G}}(x)$ as $n \rightarrow \infty$ for all $x > 0$.  Then for each $x > 0$
\begin{align*}
\lim \underset{n \rightarrow \infty}{\sup}u_{n}^{\mathcal{G}^{n}}(x) \leq \lim \underset{n \rightarrow \infty}{\sup} u_{n}^{Z_{t}^{\mathcal{G}^{n}}}(x) = u^{\mathcal{G}}(x)
\end{align*}
Proposition 3.9 gives us the other direction so the proof is complete.
\end{proof}
\begin{example}
Recall that the martingale preserving measure $\tilde{P}_{t}$ exists on a time interval $[0,t]$, where $t$ is fixed.  The simplest example is to take $Y = S_{1}/S_{t} \overset{d}{=} \exp{(\omega(1) - \omega(t))} \overset{d}{=} \exp{\omega(1-t)}$, and $Y_{n} = \exp{X_{1}^{n}}/\exp{X_{t}^{n}} \overset{d}{\approx} \exp{\frac{\sqrt{1-t}}{\sqrt{\lfloor n(1-t) \rfloor}} S_{\lfloor n(1-t) \rfloor}}$ as $n \rightarrow \infty$.  Then for $s < t$.
\begin{align*}
\eta_{s}^{y} &= \frac{P[e^{\omega(1)}/e^{\omega(t)} \in dy | \mathcal{F}_{s}]}{P[e^{\omega(1)} \in dy]} \\
&=\frac{ P[e^{\omega(1) - \omega(t)} \in dy | \mathcal{F}_{s}]}{P[e^{\omega(1)} \in dy]} \\
&\overset{d}{=}\frac{P[e^{\omega(1-t)} \in dy | \mathcal{F}_{s}]}{P[e^{\omega(1)} \in dy]} \\
&= \frac{P[e^{\omega(1-t)} \in dy]}{P[e^{\omega(1)} \in dy]}
\end{align*}
Similarly,
\begin{align*}
\eta_{t}^{y,n} &= \frac{P[e^{X_{1}^{(n)} - X_{t}^{(n)}} \in dy | \mathcal{F}_{s}]}{P[e^{X_{1}^{(n)}} \in dy]} \\
&\overset{d}{=} \frac{P[e^{X_{1-t}^{(n)}} \in dy | \mathcal{F}_{s}]}{P[e^{X_{1}^{n}} \in dy]} \\
&= \frac{P[e^{X_{1-t}^{(n)}} \in dy]}{P[e^{X_{1}^{n}} \in dy]}
\end{align*}
Using Theorem 3.21 which gives us the distribution of a simple random walk, as well as the distribution for a normal random variable, we find
\begin{align*}
\frac{\eta_{s}^{y,n}}{\eta_{s}^{y}}\approx \frac{\binom{\lfloor n(1-t) \rfloor}{\frac{\lfloor n(1-t) \rfloor + y}{2}}2^{-\lfloor n(1-t) \rfloor}}{\frac{1}{\sqrt{2 \pi (1-t)}}e^{\frac{-y^{2}}{2(1-t)}}} \cdot \frac{\frac{1}{\sqrt{2 \pi}}e^{\frac{-y^{2}}{2}}}{\binom{n}{\frac{n + y}{2}}2^{-n}}
\end{align*}
In this example we see that as $n \rightarrow \infty$ this fraction stays uniformly bounded in $y, \omega$, so assumptions 3, and 4 hold for $n$ sufficiently large.
\end{example}
\end{subsection}
\end{section}
\begin{section}{Conclusion}
In this paper we studied an informed insider that trades in an arbitrage free financial market, maximizing expected utility of wealth on a given time horizon.  We assumed this informed insider possessed extra information of some function $Y$ of the future prices of a stock. In particular, we studied the "strong approach" where the investor knows the functional $\omega$ by $\omega$.  This modeling approach using the initial enlargement of a filtration is developed in \cite{MR604176}. \\
We then used this framework to extend Kreps' conjecture \cite{MR4154768} that optimal expected utility for a sequence of discrete-time economies in their discrete-time enlarged filtrations will approach the optimal utility of the Black-Scholes-Merton economy in its enlarged filtration under certain conditions. \\
In the future, we will aim to extend this result using the "weak approach," when the investor knows the law of the functional $Y$ under the effective probability of the market assumed to be unknown.  This notion of weak information is defined in \cite{MR2021790}, and further developed in \cite{MR3954302, MR2213259}.
\end{section}
\bibliography{Lindsell_2022_bib}
\bibliographystyle{plain}
\nocite{*}
\end{document}